\newtheorem{thm}{Theorem}
\newtheorem{definition}{Definition}
\newtheorem{claim}{Claim}
\newtheorem{lemma}[thm]{Lemma}
\newtheorem{notation}[thm]{Notation}
\newtheorem{prop}[thm]{Proposition}
\newtheorem{observation}[thm]{Observation}
\newcommand{\thistheoremname}{}
\newtheorem*{genericthm*}{\thistheoremname}
\newenvironment{namedthm*}[1]
{\renewcommand{\thistheoremname}{#1}%
	\begin{genericthm*}}
	{\end{genericthm*}}
\newcommand{\abs}[1]{\left\lvert{#1}\right\rvert}
\newcommand{\floor}[1]{\left\lfloor{#1}\right\rfloor}
\newcommand{\ceil}[1]{\left\lceil{#1}\right\rceil}
\title{On the Rainbow Tur\'an number of paths}
\author
{
	Beka Ergemlidze
	\thanks{ Department of Mathematics, Central European University, Budapest.
		E-mail: \texttt{beka.ergemlidze@gmail.com}} \qquad 
	Ervin Gy\H{o}ri \thanks{R\'enyi Institute, Hungarian Academy of Sciences and 
		Department of Mathematics, Central European University, Budapest. E-mail: \texttt{gyori.ervin@renyi.mta.hu}} \qquad 
	Abhishek Methuku \thanks{Department of Mathematics, Central European University, Budapest. (Corresponding) E-mail: \texttt{abhishekmethuku@gmail.com}} 
}
\begin{document}
	
	\maketitle
	
	\begin{abstract}
		Let $F$ be a fixed graph. The \emph{rainbow Tur\'an number} of $F$ is defined as the maximum number of edges in a graph on $n$ vertices that has a proper edge-coloring with no rainbow copy of $F$ (where a rainbow copy of $F$ means a copy of $F$ all of whose edges have different colours). The systematic study of such problems was initiated by Keevash, Mubayi, Sudakov and Verstra\"ete.
		
		In this paper, we show that the rainbow Tur\'an number of a path with $k+1$ edges is less than $\left(\frac{9k}{7}+2\right) n$, improving an earlier estimate of Johnston,  Palmer and Sarkar.
	\end{abstract}

	\section{Introduction}
	
Given a graph $F$, the maximum number of edges in a graph on $n$ vertices that contains no copy of $F$ is known as the \emph{Tur\'an number} of $F$, and is denoted by $ex(n, F)$. An edge-colored graph is called \emph{rainbow} if all its edges have different colors.  Given a graph $F$, the \emph{rainbow Tur\'an number} of $F$ is defined as the maximum number of edges in a graph on $n$ vertices that has a proper edge-coloring with no rainbow copy of $F$, and it is denoted by $ex^{*}(n, F)$. 

The systematic study of rainbow Tur\'an numbers was initiated in \cite{KMSV} by Keevash, Mubayi, Sudakov and Verstra\"ete. Clearly, $ex^{*}(n, F) \ge ex(n,F)$. They determined $ex^{*}(n, F)$ asymptotically for any non-bipartite graph $F$, by showing that $ex^{*}(n, F) = (1+o(1)) ex(n,F)$.
For bipartite $F$ with a maximum degree of $s$ in one of the parts, they proved $ex^{*}(n, F) = O(n^{1/s})$. This matches the upper bound
for the (usual) Tur\'an numbers of such graphs. 

Keevash, Mubayi, Sudakov and Verstra\"ete also studied the rainbow Tur\'an problem for even cycles. More precisely, they showed that $ex^{*}(n, C_{2k}) = \Omega(n^{1+1/k})$ using the construction of large $B^{*}_k$-sets of Bose and Chowla \cite{Bose_Chowla}-- it is conjectured that the same lower bound holds for $ex^{*}(n, C_{2k})$ and is a well-known difficult open problem in extremal graph theory. They also proved a matching upper bound in the case of six-cycle $C_6$, so it known that $ex^{*}(n, C_{6}) = \Theta(n^{4/3}) = ex(n, C_6)$. However, interestingly, they showed that $ex^{*}(n, C_{6})$ is asymptotically larger than $ex(n, C_{6})$ by a multiplicative constant. 
Recently,  Das, Lee and Sudakov \cite{Das} showed that $ex^{*}(n, C_{2k}) = O(n^{1+\frac{(1+\epsilon_k) \ln k}{k}})$, where $\epsilon_k \to 0$ as $k \to \infty$. 

For an integer $k$, let $P_k$ denote a path of length $k$, where the length of a path is defined as the number of edges in it. Erd\H{o}s and Gallai \cite{ErdGallai} proved that $ex(n,P_{k+1}) \le  \frac{k}{2}n$; moreover, they showed that if $k+1$ divides $n$, then the unique extremal graph is the vertex-disjoint union of $\frac{n}{k+1}$ copies of $K_{k+1}$. 


On the other hand, Keevash, Mubayi, Sudakov and Verstra\"ete \cite{KMSV} showed that in some cases, the rainbow Tur\'an number of $P_k$ can be strictly larger than the usual Tur\'an number of $P_k$: Maamoun and Meyniel \cite{Maamoun} gave an example of a proper coloring of $K_{2^k}$ containing no rainbow path with $2^k-1$ edges. By taking a vertex-disjoint union of such $K_{2^k}$'s, Keevash ~et. ~al. showed that $ex^{*}(n, P_{2^k-1}) \ge \binom{2^k}{2} \floor{\frac{n}{2^k}} = (1+o(1)) \frac{2^k-1}{2^k-2} ex(n, P_{2^k-1})$-- so $ex^{*}(n, P_{2^k-1})$ is not asymptotically equal to $ex(n, P_{2^k-1})$. They also mentioned that determining the asymptotic behavior of $ex^{*}(n, P_{k+1})$ is an interesting open problem, and stated the natural conjecture that the optimal construction is a disjoint union of cliques of size $c(k)$, where $c(k)$ is chosen as large as possible so that the cliques can be properly colored with no rainbow $P_{k+1}$. For  $P_4$, this conjecture was disproved by Johnston,  Palmer and Sarkar \cite{JPalmer}: Since any properly edge-colored $K_5$ contains a rainbow $P_4$, and $K_4$ does not contain a $P_4$, the conjecture for $P_4$ would be that $ex^{*}(n, P_{4}) \sim \frac{3n}{2}$. But they show that in fact, $ex^{*}(n, P_{4}) \sim 2n$ by showing a proper edge-coloring of $K_{4,4}$ without no rainbow $P_4$, and then taking $\frac{n}{8}$ vertex-disjoint copies of $K_{4,4}$. For general $k$, they proved the following:

\begin{thm}[Johnston,  Palmer and Sarkar \cite{JPalmer}]
For any positive integer $k$, we have
$$\frac{k}{2}n \le ex^{*}(n, P_{k+1})  \le  \ceil{\frac{3k+1}{2}} n.$$
\end{thm}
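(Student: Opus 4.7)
The lower bound is realized by the vertex-disjoint union $H$ of $\lfloor n/(k+1)\rfloor$ copies of $K_{k+1}$ (padded with isolated vertices if needed): each component contains no $P_{k+1}$ at all, so any proper edge-coloring of $H$ has no rainbow $P_{k+1}$, and
\[
ex^{*}(n,P_{k+1})\ \ge\ |E(H)|\ =\ \Big\lfloor\tfrac{n}{k+1}\Big\rfloor\binom{k+1}{2}\ \ge\ \tfrac{k}{2}n\ -\ O(k^2),
\]
which matches the claimed $\tfrac{k}{2}n$ exactly when $(k+1)\mid n$ and up to a lower-order term otherwise.

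For the upper bound, let $G$ be a properly edge-colored graph on $n$ vertices with no rainbow $P_{k+1}$. Since both properness of the coloring and the absence of a rainbow $P_{k+1}$ pass to subgraphs, it suffices to establish the minimum-degree inequality $\delta(G)\le\lceil(3k+1)/2\rceil$; iteratively deleting a vertex of minimum degree then yields $|E(G)|\le\lceil(3k+1)/2\rceil n$. The plan is to prove this degree bound by contradiction: suppose every vertex of $G$ has degree greater than $\lceil(3k+1)/2\rceil$ and construct a rainbow $P_{k+1}$. The core tool is a greedy extension of a rainbow path $v_0v_1\dots v_t$ with edge-colors $c_1,\dots,c_t$: extension at $v_t$ fails only if every neighbor of $v_t$ lies in $\{v_0,\dots,v_{t-1}\}$ (at most $t$ vertices) or is joined to $v_t$ by an edge of color in $\{c_1,\dots,c_t\}$ (at most $t$ such neighbors by properness, with $v_{t-1}$ double-counted), so extension succeeds when $d(v_t)\ge 2t$. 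A naive iteration of this step gives only the weaker bound $\delta(G)\le 2k-3$, sufficient for small $k$ but insufficient as $k$ grows.

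To sharpen the constant to $(3k+1)/2$, take $P=v_0v_1\dots v_\ell$ to be a \emph{longest} rainbow path in $G$ (so $\ell\le k$) and combine the greedy extension at $v_\ell$ with P\'osa-type rotations: whenever $v_\ell$ is joined to an interior vertex $v_i$ by an edge whose color does not appear in $\{c_{i+2},\dots,c_\ell\}$, the rotated path $v_0v_1\dots v_iv_\ell v_{\ell-1}\dots v_{i+1}$ is another rainbow path of length $\ell$ with endpoint $v_{i+1}$, and by maximality it too cannot be extended, inheriting the same local degree constraint. The main obstacle is the color book-keeping across successive rotations: each rotation replaces a path edge by a chord, and one must verify that the new chord's color is not already used on the surviving portion of the path. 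A careful pigeon-hole over the resulting set of rotation-compatible endpoints, combined with the large minimum-degree hypothesis, should force the desired contradiction and establish the improved degree bound $\lceil(3k+1)/2\rceil$.
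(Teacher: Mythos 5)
The theorem you are proving is quoted from Johnston, Palmer and Sarkar; the paper itself gives no proof of it, so I compare your sketch with the standard argument (which is also embedded in the paper's Section 2.1). Your lower bound is fine: disjoint copies of $K_{k+1}$ contain no $P_{k+1}$ at all, giving $\frac{k}{2}n$ exactly when $(k+1)\mid n$, which is the content of the stated bound. Your reduction of the upper bound to a hereditary minimum-degree (degeneracy) bound is also legitimate. The problem is that the heart of the theorem --- improving the easy endpoint bound $d(v_t)\le 2t\le 2k$ to $\lceil (3k+1)/2\rceil$ --- is not actually proved: ``a careful pigeon-hole \dots should force the desired contradiction'' is precisely the missing step, not a proof. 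Worse, the rotation step as you state it is incorrect: for the rotated path $v_0v_1\dots v_iv_\ell v_{\ell-1}\dots v_{i+1}$ to be rainbow, the chord colour $c(v_iv_\ell)$ must avoid \emph{all} of $c_1,\dots,c_i$ and $c_{i+2},\dots,c_\ell$ (only $c_{i+1}$ is dropped), whereas you only require it to avoid $c_{i+2},\dots,c_\ell$; with your condition the rotated path may repeat a colour $c_j$ with $j<i$. So both the basic rotation lemma and the final counting are left open, and the proposal does not establish the upper bound.

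The concrete missing idea is much simpler than iterated rotations, and it is exactly what the paper's Claims \ref{claimone}--\ref{claimzero} (and the proof of Claim \ref{niceclaim}) record. Let $P=v_0\dots v_\ell$ be a longest rainbow path, with colour sets $L_{out},L_{new},R_{out},R_{new}$ as in Definition \ref{BasicColorSets} (with $\ell$ in place of $k$). Every edge from $v_0$ or $v_\ell$ leaving $P$ must carry a path colour, and $L_{out}$ is disjoint from $S_R=\{c_{j+1}: v_\ell v_j\in E(G),\ c(v_\ell v_j)\in R_{new}\}$, since otherwise a single rotation plus the outside edge yields a rainbow path of length $\ell+1$; hence $l_{out}\le \ell-r_{new}$ and symmetrically $r_{out}\le \ell-l_{new}$. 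Combining $d(v_0)\le \ell+l_{new}$, $d(v_\ell)\le \ell+r_{new}$ with $d(v_0)\le \ell+l_{out}\le 2\ell-r_{new}$, $d(v_\ell)\le 2\ell-l_{new}$ and averaging gives $d(v_0)+d(v_\ell)\le 3\ell\le 3k$; now delete both endpoints (or the endpoint of smaller degree, which has degree at most $\lfloor 3k/2\rfloor$) and induct on $n$. This two-endpoint colour count gives the $\lceil(3k+1)/2\rceil n$ bound with no rotation bookkeeping at all; the heavier machinery of iterated endpoint-generation (terminal vertices, the auxiliary graph $H$, and the matching $M$) is what the present paper develops to push the constant further down to $9k/7$.
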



	
	We improve the above bound by showing the following: 
	
	\begin{thm}
		\label{main_thm}
		For any positive integer $k$, we have
		$$ex^{*}(n, P_{k+1}) < \left(\frac{9k}{7}+2\right)n.$$
	\end{thm}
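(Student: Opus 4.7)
The plan is to prove the following equivalent local statement: any properly edge-colored graph $H$ with no rainbow $P_{k+1}$ contains a vertex of degree strictly less than $\frac{9k}{7}+2$. Granted this, the theorem follows by iteratively removing such a low-degree vertex: each deletion discards fewer than $\frac{9k}{7}+2$ edges, so $e(G) < \left(\frac{9k}{7}+2\right) n$ after all $n$ deletions.

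To find a low-degree vertex, I fix a longest rainbow path $P = v_0 v_1 \ldots v_\ell$ in $H$, so $\ell \le k$, and focus on the endpoint $v_0$. I classify the neighbors of $v_0$ other than $v_1$:
\begin{itemize}
\item A neighbor $u \notin V(P)$ must satisfy $c(v_0 u) \in \{c(v_1 v_2),\ldots,c(v_{\ell-1}v_\ell)\}$; otherwise $u v_0 v_1 \ldots v_\ell$ is a longer rainbow path, contradicting the maximality of $P$.
\item A neighbor $v_j \in V(P)$ with $2 \le j \le \ell$, whose color $c(v_0 v_j)$ does not appear on $P$, gives rise to a rotated rainbow path $v_{j-1} v_{j-2} \ldots v_0 v_j v_{j+1} \ldots v_\ell$ of the same length, with a new endpoint $v_{j-1}$.
\item A neighbor $v_j \in V(P)$ whose color $c(v_0 v_j)$ already appears on $P$ occupies one of the at most $\ell-1$ ``path colors'' at $v_0$, competing with the outside-neighbor slots.
\end{itemize}
Since the coloring is proper, the colors at $v_0$ are distinct, so these constraints combine to give $\deg(v_0) \le \ell + r$, where $r$ counts the rotation-producing neighbors of the second type.

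The heart of the argument is an iterative rotation-closure: starting with $P$, each rotation produces a new endpoint of a longest rainbow path, and continuing from the new endpoints builds a set $\mathcal{S}$ of vertices, each serving as an endpoint of some longest rainbow path. Every $w \in \mathcal{S}$ satisfies a bound $\deg(w) \le \ell + r_w$ of the same form, while the total rotation count across $\mathcal{S}$ is itself bounded by the shared palette of colors on the respective paths. Averaging these two bounds over $\mathcal{S}$ is designed to yield some $w \in \mathcal{S}$ with $\deg(w) < \frac{9\ell}{7}+2 \le \frac{9k}{7}+2$. The coefficient $\frac{9}{7}$ should emerge as the optimum of the trade-off between the size of $\mathcal{S}$ and the split of each endpoint's degree between ``path colors'' and rotation neighbors.

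The principal obstacle is executing the averaging cleanly: one must ensure that distinct rotations produce genuinely new endpoints (or account for the collisions via additional color constraints), and extract the precise constant $\frac{9}{7}$ rather than the weaker $\frac{3}{2}$ obtained from a two-endpoint argument along the lines of Johnston--Palmer--Sarkar. A secondary technicality is the regime of small $\ell$, where the trivial bound $\deg(v_0) \le 2\ell - 1$ is already below the target $\frac{9k}{7}+2$; the hard case is therefore when $\ell$ is close to $k$.
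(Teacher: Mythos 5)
Your reduction to a minimum-degree statement is an implication, not an equivalence, and it is strictly stronger than what the paper establishes: nowhere does the paper (or any step you sketch) exhibit a single vertex of degree below $\frac{9k}{7}+2$ in a rainbow-$P_{k+1}$-free properly colored graph. The rotation machinery you describe gives exactly what the paper's basic claims give: for an endpoint $v_0$ of a longest rainbow path, $d(v_0)\le k+l_{new}$, and for two vertices $a,b$ that are the two ends of a \emph{common} rainbow path on the same vertex set, $d(a)+d(b)\le 3k$ (up to a correction for non-edges). Averaging this over endpoints can only yield a vertex of degree about $\frac{3k}{2}$, i.e.\ the Johnston--Palmer--Sarkar regime you explicitly want to beat; you name this as the ``principal obstacle'' but offer no mechanism to overcome it, and the words ``should emerge'' and ``is designed to yield'' sit exactly where the new idea is required. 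That is a genuine gap, not a technicality.

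The paper's route is structurally different and cannot be simulated by a single-vertex deletion. It assumes (for the inductive step on $n$) that \emph{every} vertex has degree at least $\frac{9k}{7}+2$, uses this to show that the set $T$ of terminal vertices of rainbow paths on $V(P^*)$ is large ($t\ge\frac{3k}{7}+1.5$, Lemma \ref{terminal_vertices}), builds the auxiliary graph $H$ on $T$ with minimum degree at least $\frac{2k}{7}+2$, extracts a matching of $m\ge\frac{3k}{14}$ pairs, and deletes all $2m$ matched vertices simultaneously. The improvement from $\frac{3k}{2}$ to $\frac{9k}{7}$ comes from the joint deletion: edges inside the deleted set are counted once rather than twice, so the number of edges incident to it is at most $\sum_i\left(d(a_i)+d(b_i)\right)-\abs{E(G[M])}\le(3k+2-2m)m<\left(\frac{9k}{7}+2\right)2m$, and induction on $n$ finishes. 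This ``bulk discount'' is invisible to any argument that tries to certify one low-degree vertex, so to salvage your plan you would either have to prove the (unproven, and quite possibly harder) minimum-degree statement by a genuinely new argument, or switch, as the paper does, to deleting a large set of mutually path-connected terminal vertices and doing induction on the number of vertices.
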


We remark that using the ideas introduced in this paper, it is conceivable that the upper bound may be further improved. However, it would be very interesting (and seems to be difficult) to prove an upper bound less than $kn$.
	\vspace{2mm}
	
We give a construction which shows that $ex^{*}(n, P_{2^k}) > ex^{*}(n, P_{2^k})$ for any $k \ge 2$. 


\vspace{2mm}

\textbf{Construction. } Let us first show a proper edge-coloring of $K_{2^k, 2^k}$ (a complete bipartite graph with parts $A$ and $B$, each of size $2^k$) with no rainbow $P_{2^k}$. The vertices of $A$ and $B$ are both identified with the vectors $\mathbb{F}_2^k$.  Each edge $uv$ with $u \in A$ and $v \in B$ is assigned the color $c(uv) := u - v$.  Clearly this gives a proper edge-coloring of $K_{2^k, 2^k}$. Moreover, if it contains a rainbow path $v_0v_1 \ldots v_{2^k}$ then such a path must use all of the colors from $\mathbb{F}_2^k$. Therefore $\sum_{i=0}^{2^k-1} c(v_i v_{i+1}) = 0$. On the other hand,  $\sum_{i=0}^{2^k-1} c(v_i v_{i+1}) = \sum_{i=0}^{2^k-1} (v_i-v_{i+1}) = v_0 - v_{2^k}$. Thus, $v_0 - v_{2^k} = 0$. But notice that since the length of the path $v_0v_1 \ldots v_{2^k}$ is even, its terminal vertices $v_0$ and $v_{2^k}$ are either both in $A$ or they are both in $B$. So they could not have been identified with the same vector in $\mathbb{F}_2^k$, a contradiction. Taking a vertex-disjoint union of such $K_{2^k, 2^k}$'s we obtain that $ex^{*}(n, P_{2^k}) \ge (2^k)^2 \floor{n/2^{k+1}} = (1+o(1)) \frac{2^k}{2^k-1} ex(n, P_{2^k})$.

\vspace{2mm}

\textbf{Remark.} This construction provides a counterexample to the above mentioned conjecture of Keevash, Mubayi, Sudakov and Verstra\"ete \cite{KMSV}  whenever the largest clique that can be properly colored without a rainbow $P_{2^k}$ has size $2^k$. This is the case for $k = 2$, as noted before. The question of determining whether this is the case for any $k \ge 3$ remains an interesting open question (see \cite{Alon_P_S} for results in this direction). 

\vspace{2mm}

\textbf{Overview of the proof and organization.} Let $G$ be a graph which has a proper edge-coloring with no rainbow $P_{k+1}$. By induction on the length of the path, we assume there is a rainbow path $v_0v_1 \ldots v_k$ in $G$. Roughly speaking, we will show that the sum of degrees of the terminal vertices of the path, $v_0$ and $v_k$ is small. Our strategy is to find a set of distinct vertices $M:= \{a_1, b_1, a_2, b_2, \ldots, a_m, b_m\} \subseteq \{v_0, v_1, \ldots, v_k\}$ (whose size is as large as possible) such that for each $1 \le i \le m$, there is a rainbow path $P$ of length $k$ with $a_i$ and $b_i$ as terminal vertices and $V(P) = \{v_0, v_1, \ldots, v_k\}$; then we show that there are not many edges of $G$ incident to the vertices of $M$, which will allow us to delete the vertices of $M$ from $G$ and apply induction.  To this end, we define the set $T \subseteq \{v_0, v_1, \ldots, v_k\}$ as the set of all vertices $v \in \{v_0, v_1, \ldots, v_k\}$ where $v$ is a terminal vertex of some rainbow path $P$ with $V(P) = \{v_0, v_1, \ldots, v_k\}$; we call $T$ the set of \emph{terminal vertices}. We will then find $M$ as a subset of $T$; moreover, it will turn out that if the size of $T$ is large, then the size of $M$ is also large--therefore, the heart of the proof lies in showing that $T$ is large. 

In Section \ref{basic_claims}, we introduce the notation and prove some basic claims. Using these claims, in Section \ref{finding_terminal_vertices}, we will show that $T$ is large (i.e., that there are many terminal vertices). Then in Section \ref{finding_the_matching} we will find the desired subset $M$ of $T$ (which has few edges incident to it).

	\section{Proof of Theorem \ref{main_thm}}

Let $G$ be a graph on $n$ vertices, and suppose it has a proper edge-coloring $c: E(G) \to \mathbb{N}$ without a rainbow path of length $k+1$. Consider a longest rainbow path $P^*$ in $G$. We may suppose it is of length $k$, otherwise we are done by induction on $k$. For the base case $k =1$, notice that any path of length 2, has to be a rainbow path. Thus $G$ can contain at most $\frac{n}{2} < (\frac{9}{7}+2)n$ edges, so we are done.
	
	\subsection{Basic claims and Notation}
	\label{basic_claims}
	In the rest of the paper, the degree of a vertex $v \in V(G)$ be denoted by $d(v)$. 
	
	
	
\begin{definition}
\label{BasicColorSets}
Let $P^* = v_0v_1 \ldots v_{k}$. Suppose the color of the edge $v_{i-1}v_{i}$ is $c(v_{i-1}v_{i}) = c_i$ for each $1 \le i \le k$. Let $L$ and $R$ denote the sets of colors of edges incident to $v_0$ and $v_k$ respectively. (Notice that since the edges of $G$ are colored properly, we have $\abs{L} = d(v_0)$ and $\abs{R} = d(v_k)$.)

We define the following subsets of $L$, $R$ and $\{c_1, c_2, \ldots, c_k\}$ corresponding to $P^*$. 
		
\begin{itemize}
			\item Let $L_{out}$ (respectively $R_{out}$) be the set of colors of the edges connecting $v_0$ (respectively $v_{k}$) to a vertex outside $P^*$. 
			
			Note that $L_{out} \subseteq \{c_1, c_2, \ldots, c_k\}$ and $R_{out} \subseteq \{c_1, c_2, \ldots, c_k\}$, otherwise we can extend $P^*$ to a rainbow path longer than $k$ in $G$. 
			
			\item Let $L_{in} = L \setminus L_{out}$ and $R_{in} = R \setminus R_{out}$. 
			
			\item 	Let $L_{old} = L \cap \{c_1, c_2, \ldots, c_k\}$ and $L_{new} = L\setminus \{c_1, c_2, \ldots, c_k\}$. Similarly, let $R_{old} = R \cap \{c_1, c_2, \ldots, c_k\}$, $R_{new} = R \setminus \{c_1, c_2, \ldots, c_k\}$.
			
			\item 	Let $S_L = \{c(v_{j-1}v_{j}) = c_j \mid v_0v_{j} \in E(G) \text{ and } c(v_{0}v_{j}) \in L_{new} \text{ and } 2 \le j \le k\}$ and $S_R = \{c(v_jv_{j+1}) = c_{j+1} \mid v_kv_{j} \in E(G) \text{ and } c(v_kv_{j}) \in R_{new} \text{ and } 0 \le j \le k-2\}$.
			
			Notice that $\abs{S_L} = \abs{L_{new}}$ and $\abs{S_R} = \abs{R_{new}}$.
			
			\item Let $L_{nice} = L \cap S_R$ and let $R_{nice} = R \cap S_L$. 
			
			\item Let $L_{res} = L_{in} \setminus (L_{new} \cup L_{nice}) = L_{old} \setminus (L_{nice}\cup L_{out})$, and $R_{res} = R_{in} \setminus (R_{new} \cup R_{nice}) = R_{old} \setminus (R_{nice}\cup R_{out})$.
		\end{itemize}
	\end{definition}


	
	\begin{notation}
		For convenience, we let $\abs{L} = l$ and $\abs{R} = r$. 
		Moreover, let $\abs{L_{out}} = l_{out}, \abs{L_{old}} = l_{old}, \abs{L_{nice}} = l_{nice}, \abs{L_{new}} = l_{new}$ and $\abs{R_{out}} = r_{out}, \abs{R_{old}} = r_{old}, \abs{R_{nice}} = r_{nice}, \abs{R_{new}} = r_{new}$. 
	\end{notation}

	Note that $$d(v_0) = l_{in} + l_{out} = l_{new} + l_{old}$$ and $$d(v_k) = r_{in} + r_{out} = r_{new} + r_{old}.$$
	
	Now we prove some inequalities connecting the quantities defined in Definition \ref{BasicColorSets} for the path $P^*$.
	
	\begin{claim}
		\label{claimone}
		$L_{out} \cap S_R = \emptyset = R_{out} \cap S_L$. 
		This implies that $L_{out} \cap L_{nice} = \emptyset = R_{out} \cap R_{nice}$ (since $L_{nice} \subset S_R$ and $R_{nice} \subset S_L$).
	\end{claim}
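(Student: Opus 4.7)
The plan is to prove the first identity $L_{out} \cap S_R = \emptyset$ by contradiction: if some color $c$ lies in both, I will reroute $P^*$ into a rainbow path of length $k+1$, contradicting the maximality of $P^*$. The identity $R_{out} \cap S_L = \emptyset$ will then follow by a symmetric rerouting with the roles of $v_0$ and $v_k$ swapped. The final sentence of the claim is immediate: by definition $L_{nice} = L \cap S_R \subseteq S_R$, so $L_{out} \cap L_{nice} \subseteq L_{out} \cap S_R = \emptyset$, and similarly for $R_{out} \cap R_{nice}$.

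For the contradiction, suppose $c \in L_{out} \cap S_R$. Since $c \in L_{out}$, there is a vertex $w \notin V(P^*)$ with $c(v_0w) = c$, and by the remark immediately after the definition of $L_{out}$ we have $c \in \{c_1,\dots,c_k\}$, so $c = c_{j+1}$ for a unique index $j$. Since $c = c_{j+1} \in S_R$ and the colors $c_1,\dots,c_k$ are pairwise distinct, the index witnessing membership in $S_R$ must be exactly $j$; hence $v_k v_j \in E(G)$, its color $c(v_kv_j)$ lies in $R_{new}$ (so in particular it is not in $\{c_1,\dots,c_k\}$), and $0 \le j \le k-2$.

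Now I would form the path
$$Q := w\, v_0\, v_1\, \cdots\, v_j\, v_k\, v_{k-1}\, \cdots\, v_{j+1},$$
which visits $w$ together with all of $v_0,\dots,v_k$ and thus has length $k+1$. Its edge colors are: $c_{j+1}$ on $wv_0$; then $c_1,\dots,c_j$ on the initial segment $v_0v_1,\dots,v_{j-1}v_j$; then $c(v_kv_j)$ on the chord; then $c_k,c_{k-1},\dots,c_{j+2}$ on the reversed tail $v_kv_{k-1},\dots,v_{j+2}v_{j+1}$. The key point is that the edge $v_jv_{j+1}$ (of color $c_{j+1}$) is no longer used, but the color $c_{j+1}$ reappears exactly once on $wv_0$; together with the new chord color (disjoint from $\{c_1,\dots,c_k\}$ because it lies in $R_{new}$), this exhibits $k+1$ distinct colors, so $Q$ is rainbow, contradicting the choice of $P^*$.

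The main thing to be careful about is the bookkeeping on indices: the restriction $0 \le j \le k-2$ built into the definition of $S_R$ is what guarantees that the reversed tail $v_k\cdots v_{j+1}$ is nonempty and that $Q$ is a genuine path of length $k+1$ (the boundary cases $j=0$ and $j=k-2$ both still yield valid paths). Checking that no color repeats on $Q$ — in particular that $c_{j+1}$ is not used twice, and that the chord color is genuinely new — is the only nontrivial verification, and once that is in place the symmetric argument for $R_{out} \cap S_L = \emptyset$ is mechanical via the reversed rerouting $w'\, v_k\, v_{k-1}\, \cdots\, v_j\, v_0\, v_1\, \cdots\, v_{j-1}$.
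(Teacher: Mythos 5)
Your proposal is correct and is essentially the paper's own argument: the rerouted path $wv_0v_1\cdots v_jv_kv_{k-1}\cdots v_{j+1}$ is exactly the paper's path $v_{j+1}\cdots v_kv_jv_{j-1}\cdots v_0w$ traversed in reverse, with the same color bookkeeping (drop $c_{j+1}$ from the path, reuse it on $wv_0$, and gain the new color $c(v_kv_j)\in R_{new}$), and the symmetric rerouting for $R_{out}\cap S_L=\emptyset$ matches as well. No gaps.
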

	\begin{proof}
		Suppose for a contradiction that $L_{out} \cap S_R \not = \emptyset$. So there exists a vertex $w \not \in \{v_0, v_1, \ldots, v_k\}$ such that $c(v_kv_j) \in R_{new}$ and $c(wv_0) = c(v_jv_{j+1})$ for some $0 \le j \le k-2$. Consider the path $v_{j+1}v_{j+2} \ldots v_kv_jv_{j-1} \ldots v_0w$. The set of colors of the edges in this path is $\{c_1, c_2, \ldots, c_k\} \setminus \{c(v_jv_{j+1})\} \cup \{c(wv_0), c(v_kv_{j})\} = \{c_1, c_2, \ldots, c_k\} \cup \{c(v_kv_{j})\}$, so it is a rainbow path of length $k+1$ in $G$, a contradiction. 
		
		Similarly, by a symmetric argument, we have $R_{out} \cap S_L = \emptyset$. 
	\end{proof}

	\begin{claim}
		\label{claimzero}
		$l_{out} \le k - r_{new}$ and $r_{out} \le k - l_{new}$. 
	\end{claim}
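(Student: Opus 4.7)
The plan is to derive both inequalities directly from Claim \ref{claimone} together with a counting argument on the $k$ colors appearing on the path $P^*$. Since the statement is symmetric in the two sides of the path, I would only argue $l_{out} \le k - r_{new}$ in detail, and obtain $r_{out} \le k - l_{new}$ by the obvious symmetric reasoning.

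First I would recall from Definition \ref{BasicColorSets} that $L_{out} \subseteq \{c_1, c_2, \ldots, c_k\}$ (otherwise the edge from $v_0$ to some outside vertex would extend $P^*$ to a rainbow path of length $k+1$). Next, I would observe that $S_R \subseteq \{c_1, c_2, \ldots, c_{k-1}\} \subseteq \{c_1, c_2, \ldots, c_k\}$ directly from its definition, and that the map $v_j \mapsto c_{j+1}$ used to define $S_R$ is injective (the indices $j$ are distinct and the colors $c_{j+1}$ are pairwise distinct along a rainbow path), so indeed $\abs{S_R} = r_{new}$ as noted.

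Now comes the key point: by Claim \ref{claimone}, $L_{out} \cap S_R = \emptyset$. Since $L_{out}$ and $S_R$ are two disjoint subsets of the $k$-element set $\{c_1, c_2, \ldots, c_k\}$, we get
\[
l_{out} + r_{new} \;=\; \abs{L_{out}} + \abs{S_R} \;=\; \abs{L_{out} \cup S_R} \;\le\; k,
\]
which rearranges to $l_{out} \le k - r_{new}$. Applying exactly the same reasoning with the roles of $v_0$ and $v_k$ swapped (using $R_{out} \cap S_L = \emptyset$, which is the symmetric half of Claim \ref{claimone}) yields $r_{out} \le k - l_{new}$. I do not anticipate any real obstacle here; the only thing to be slightly careful about is verifying that $\abs{S_R} = r_{new}$ (i.e.\ that the definition really produces $r_{new}$ distinct colors), but this is immediate from $P^*$ being rainbow.
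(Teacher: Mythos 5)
Your proposal is correct and follows exactly the paper's argument: both inequalities come from the disjointness $L_{out}\cap S_R=\emptyset$ (and its symmetric counterpart) given by Claim \ref{claimone}, together with the fact that $L_{out}$ and $S_R$ are subsets of the $k$-element set $\{c_1,\ldots,c_k\}$ and $\abs{S_R}=r_{new}$. Your extra verification that $\abs{S_R}=r_{new}$ is already noted in Definition \ref{BasicColorSets}, so nothing further is needed.
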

	\begin{proof}
		By Claim \ref{claimone}, $L_{out} \cap S_R = \emptyset$. Since both $L_{out}$ and $S_R$ are subsets of $\{c_1, c_2, \ldots, c_k\}$, this implies, $\abs{L_{out}} = l_{out} \le k- \abs{S_R} = k - r_{new}$, as desired. Similarly, $r_{out} \le k - l_{new}$. 
	\end{proof}
	
	
	We will prove Theorem \ref{main_thm} by induction on the number of vertices $n$. 
	For the base cases, note that for all $n \le k$, the number of edges is trivially at most $$\binom{n}{2} \le \frac{kn}{2} < \left(\frac{9k}{7}+2\right)n,$$ so the statement of the theorem holds. If $d(v) < \frac{9k}{7}+2$ for some vertex $v$ of $G$, then we delete $v$ from $G$ to obtain a graph $G'$ on $n-1$ vertices. By induction hypothesis, the number of edges in $G'$ is less than $(\frac{9k}{7}+2)(n-1)$. So the total number of edges in $G$ is less than $(\frac{9k}{7}+2)n$, as desired. 
	
	Therefore, from now on, we assume that for all $v \in V(G)$, $$d(v) \ge \frac{9k}{7}+2.$$ Since $d(v_0) =  l = l_{old}+l_{new}$ and $l_{old} \le k$, we have that 
	\begin{equation}
	\label{lnew}
	l_{new} \ge \frac{2k}{7}+2.
	\end{equation}
	
	Similarly,
	\begin{equation}
	\label{rnew}
	r_{new} \ge \frac{2k}{7}+2.
	\end{equation}
	
	\begin{claim}
		\label{niceclaim}
		We have $$l_{nice} + r_{nice} \ge \frac{4k}{7}+4.$$
	\end{claim}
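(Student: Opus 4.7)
The plan is to reinterpret $L_{nice}$ and $R_{nice}$ as intersections of subsets of the $k$-element color set $\{c_1,\dots,c_k\}$, and then apply the trivial inclusion--exclusion lower bound $|A\cap B|\ge |A|+|B|-|U|$.

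First, I would observe that $S_R\subseteq\{c_1,\dots,c_k\}$, so
$L_{nice}=L\cap S_R=(L\cap\{c_1,\dots,c_k\})\cap S_R=L_{old}\cap S_R.$
Symmetrically, $R_{nice}=R_{old}\cap S_L$. Now $L_{old}$ and $S_R$ are both subsets of the $k$-element universe $\{c_1,\dots,c_k\}$, so
$$l_{nice}=|L_{old}\cap S_R|\ge l_{old}+|S_R|-k=l_{old}+r_{new}-k,$$
using $|S_R|=r_{new}$ from Definition \ref{BasicColorSets}. The same argument in the universe $\{c_1,\dots,c_k\}$ applied to $R_{old}$ and $S_L$ yields $r_{nice}\ge r_{old}+l_{new}-k$.

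Adding these two inequalities and using the identities $l_{old}+l_{new}=l=d(v_0)$ and $r_{old}+r_{new}=r=d(v_k)$, I get
$$l_{nice}+r_{nice}\ge (l_{old}+l_{new})+(r_{old}+r_{new})-2k=d(v_0)+d(v_k)-2k.$$
By the standing minimum-degree assumption $d(v)\ge \tfrac{9k}{7}+2$ derived just above the claim, this gives
$$l_{nice}+r_{nice}\ge 2\left(\tfrac{9k}{7}+2\right)-2k=\tfrac{4k}{7}+4,$$
which is exactly the bound we want.

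There is no real obstacle here: once one recognizes that $L_{nice}$ and $R_{nice}$ are intersections inside the common $k$-element set of path colors, the claim falls out of inclusion--exclusion plus the minimum-degree assumption. Claim \ref{claimone} (which gives $L_{out}\cap S_R=\emptyset$) could be used to sharpen the ambient universe to $\{c_1,\dots,c_k\}\setminus L_{out}$ of size $k-l_{out}$, but this yields the same bound $l_{old}+r_{new}-k$ since $L_{out}\subseteq L_{old}$, so it is not needed. The only mild care is in checking that indeed $L_{out}\subseteq L_{old}$ (immediate from $L_{out}\subseteq L\cap\{c_1,\dots,c_k\}$) to justify the identifications used.
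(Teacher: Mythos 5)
Your proof is correct, and it establishes exactly the same pair of intermediate inequalities as the paper -- namely $l_{nice} \ge l_{old} + r_{new} - k$ and $r_{nice} \ge r_{old} + l_{new} - k$ -- before adding them and invoking the minimum-degree assumption $d(v)\ge \frac{9k}{7}+2$, just as the paper does (the paper writes the first one in the equivalent form $l - l_{new} - l_{nice} \le k - r_{new}$). The only difference is how that inequality is obtained: the paper decomposes $L_{old}\setminus L_{nice}$ as $L_{res}\cup L_{out}$ and argues that this union is disjoint from $S_R$ inside $\{c_1,\dots,c_k\}$, which requires Claim \ref{claimone} to handle the $L_{out}$ part, whereas you get it in one line from $L_{nice}=L_{old}\cap S_R$ together with the inclusion--exclusion bound $\abs{A\cap B}\ge\abs{A}+\abs{B}-k$ for subsets of the $k$-element set of path colors. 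Your observation that Claim \ref{claimone} is not actually needed for this particular claim is accurate (it is still needed elsewhere, e.g.\ for Claim \ref{claimzero} and hence the proof of Claim \ref{aibi}), and the identifications you rely on -- $L_{nice}=L_{old}\cap S_R$, $\abs{S_R}=r_{new}$, and $l_{old}+l_{new}=l=d(v_0)$ with its symmetric counterpart -- are all immediate from Definition \ref{BasicColorSets}. So this is essentially the paper's argument, with a slightly more direct and self-contained derivation of the key step.
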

	\begin{proof}
		First notice that $L_{res} \cap S_R = \emptyset$. Indeed, by definition, $L_{res} \cap S_R = (L_{res} \cap L) \cap S_R = L_{res} \cap (L \cap S_R) = L_{res} \cap L_{nice} = \emptyset$. Moreover, by Claim \ref{claimone}, $L_{out} \cap S_R = \emptyset$. Therefore, we have $(L_{res} \cup L_{out}) \cap S_R = \emptyset$. Moreover, $(L_{res} \cup L_{out}) \cup S_R \subseteq \{c_1,c_2, \ldots, c_k\}$. Therefore, $l_{res} + l_{out} \le k - \abs{S_R} = k - r_{new}$. On the other hand, by definition, $l_{res} + l_{out}= l-l_{new}-l_{nice}$. So $$l-l_{new}-l_{nice} \le k - r_{new}.$$ By a symmetric argument, we get $$r-r_{new}-r_{nice} \le k - l_{new}.$$ Adding the above two inequalities and rearranging, we get $l + r -l_{nice} -r_{nice} \le 2k$, so $$l_{nice} + r_{nice} \ge l + r- 2k = d(v_0) + d(v_k)-2k \ge \frac{4k}{7}+4,$$ as required.
		
	\end{proof}

\subsection{Finding many terminal vertices}
\label{finding_terminal_vertices}
	
	\begin{definition} [Set of terminal vertices]
		Let $T$ be the set of all vertices $v \in \{v_0, v_1, v_2, \ldots, v_k\}$ such that $v$ is a terminal (or end) vertex of some rainbow path $P$ with $V(P) = \{v_0, v_1, v_2, \ldots, v_k\}$.

		For convenience, we will denote the size of $T$ by $t$.
	\end{definition}
	
	The next lemma yields a lower bound on the number of terminal vertices and is crucial to the proof of Theorem \ref{main_thm}.
	
	\begin{lemma}
		\label{terminal_vertices}
		We have $$\abs{T} = t \ge \frac{3k}{7}+1.5.$$
	\end{lemma}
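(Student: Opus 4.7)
The plan is to produce a large set of terminal vertices by means of Pósa-style rotations of $P^*$ that exploit Claim \ref{niceclaim}. For each color in $L_{nice}$ (respectively $R_{nice}$) I produce a rainbow path of length $k$ on $V(P^*)$ with a prescribed endpoint, and then estimate how these endpoints overlap. To start, fix $c_{j+1}\in L_{nice}$. Since $L_{nice}\subseteq L$ and $L_{nice}\cap L_{out}=\emptyset$ by Claim \ref{claimone}, the edge of color $c_{j+1}$ at $v_0$ lands on some $v_a$ with $a\in\{1,\dots,k\}\setminus\{j,j+1\}$; moreover $c_{j+1}\in S_R$ yields an edge $v_kv_j$ with color $\alpha\in R_{new}$. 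Splicing these two edges into $P^*$---as in the rearrangement
\[
v_{j+1}v_{j+2}\cdots v_a\, v_0 v_1\cdots v_j\, v_k v_{k-1}\cdots v_{a+1}\qquad(a>j+1),
\]
and analogously for $a<j$---gives a rainbow path of length $k$ on $V(P^*)$ with $v_{j+1}$ as an endpoint (its colors are $\{c_1,\dots,c_k,\alpha\}$ with exactly one of $c_a,c_{a+1}$ missing). Ranging over $L_{nice}$ produces $l_{nice}$ distinct terminal vertices $T_L:=\{v_{j+1}:c_{j+1}\in L_{nice}\}\subseteq\{v_2,\dots,v_{k-1}\}$, and the symmetric construction gives $T_R:=\{v_{j-1}:c_j\in R_{nice}\}\subseteq\{v_1,\dots,v_{k-2}\}$ of size $r_{nice}$.

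Since $v_0,v_k\in T$ and are disjoint from $T_L\cup T_R\subseteq\{v_1,\dots,v_{k-1}\}$, Claim \ref{niceclaim} gives
\[
|T|\ge |T_L\cup T_R|+2 = l_{nice}+r_{nice}-d+2 \ge \tfrac{4k}{7}+6-d,
\]
where $d:=|T_L\cap T_R|$. The desired bound follows immediately whenever $d\le \tfrac{k}{7}+4.5$, so the interesting regime is $d>\tfrac{k}{7}+4.5$. A position $v_i$ lies in $T_L\cap T_R$ precisely when $c_i\in L_{nice}$ and $c_{i+1}\in R_{nice}$, which by the definitions of $S_R$ and $S_L$ witnesses two new-colored edges $v_kv_{i-1}$ (color $\alpha_i\in R_{new}$) and $v_0v_{i+1}$ (color $\beta_i\in L_{new}$) simultaneously.

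For each such ``doubly-nice'' index $i$, I would exhibit the additional rainbow path
\[
v_{i-2}v_{i-3}\cdots v_0\, v_{i+1}v_{i+2}\cdots v_k\, v_{i-1}v_i,
\]
whose edge colors form $(\{c_1,\dots,c_k\}\setminus\{c_{i-1},c_{i+1}\})\cup\{\alpha_i,\beta_i\}$ and are therefore all distinct provided $\alpha_i\ne\beta_i$. This places the new vertex $v_{i-2}$ into $T$, and a charging argument distributes these extras to close the gap $d-\tfrac{k}{7}-4.5$, producing $|T|\ge\tfrac{3k}{7}+1.5$. The main obstacle is the pathological subcase $\alpha_i=\beta_i$---which proper coloring does permit, since $v_0v_{i+1}$ and $v_kv_{i-1}$ share no vertex---together with the possibility that $v_{i-2}$ already lies in $T_L\cup T_R$ and thus contributes no new terminal vertex. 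To handle this I would invoke alternative rotations (swapping $v_{i-2}$ for $v_{i+2}$ by mirroring the construction, or switching to the companion witnessing edges $v_0v_a$ and $v_kv_b$ from the first step, which yield terminal vertices of the form $v_{a\pm 1},v_{b\pm 1}$), and then optimize the resulting case split; the coefficient $\tfrac{3}{7}$ is exactly what emerges from balancing the bound of the second paragraph against the density of extra terminal vertices produced here.
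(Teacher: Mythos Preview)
Your construction of $T_L=\{v_{j+1}:c_{j+1}\in L_{nice}\}$ and $T_R=\{v_{j-1}:c_j\in R_{nice}\}$ is correct and elegant, and the bound $|T|\ge l_{nice}+r_{nice}-d+2\ge\tfrac{4k}{7}+6-d$ is valid. The extra rotation producing $v_{i-2}$ for doubly-nice $i$ is also correct when $\alpha_i\ne\beta_i$. However, the final paragraph is not a proof: you acknowledge the two obstacles ($\alpha_i=\beta_i$, and $v_{i\pm2}$ already in $T_L\cup T_R$) but do not resolve them. The second one is fatal as stated. Nothing you have written rules out, say, $T_L=T_R$ being an arithmetic progression of step~$2$; then every $v_{i-2}$ and every $v_{i+2}$ already lies in $T_L\cup T_R$ except for the two ends, so your extras contribute $O(1)$ new vertices and the bound stalls at $|T|\ge d+O(1)$. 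Since Claim~\ref{niceclaim} only forces $d\ge\tfrac{2k}{7}+2$ in this scenario, you fall short of $\tfrac{3k}{7}+1.5$. The promised ``alternative rotations'' via the companion endpoints $v_{a\pm1},v_{b\pm1}$ are exactly the terminal vertices coming from Claim~\ref{nice_edges}, and there is no reason these avoid $T_L\cup T_R$ either. The assertion that ``$\tfrac{3}{7}$ is exactly what emerges from balancing'' is unsupported.

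The paper's argument is structurally different and does not attempt to control the overlap $d$. Instead it works directly with the \emph{new} edges: it introduces indices $a,b$ (the second-extremal positions of $R_{new}$ and $L_{new}$ edges), proves that every $L_{new}$ or $R_{new}$ edge landing in the interval $[a,b]$ yields \emph{two} terminal neighbours (Claim~\ref{bothsides_new}), and then double-counts pairs $(e,x)$ with $e$ a new/nice edge and $x$ an adjacent terminal vertex, separately over the three ranges $[0,a-1]$, $[a,b]$, $[b+1,k]$. Summing gives $4t\ge (l_{nice}+r_{nice})+2(l_{new}+r_{new})-6$, and now the lower bounds $l_{new},r_{new}\ge\tfrac{2k}{7}+2$ enter explicitly alongside Claim~\ref{niceclaim}. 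Your approach never brings $l_{new}+r_{new}$ into the inequality, which is why the overlap case cannot be closed.
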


The rest of this subsection is devoted to the proof of Lemma \ref{terminal_vertices}. 

	\subsubsection*{Proof of Lemma \ref{terminal_vertices}}
	
	Recall that $P^* = v_0 v_1\ldots v_k$ and $c(v_jv_{j+1}) = c_j$. First we make a simple observation. 
	
	\begin{observation}
		\label{vovljumping}
		If $c(v_0v_k) \in L_{new} \cup R_{new}$, then every vertex $v_i \in T$. Indeed, the path $v_iv_{i-1}v_{i-2} \ldots v_0v_k v_{k -1} \ldots v_{i+1}$ is a rainbow path with $v_i$ as a terminal vertex. Thus $\abs{T} = k+1 \ge \frac{3k}{7}+1.5$, and we are done. So from now on, we assume $c(v_0v_k) \not \in L_{new} \cup R_{new}$.
		
		This implies that $c(v_0v_1) \not \in L_{nice}$ and $c(v_kv_{k -1}) \not \in R_{nice}$, because $c(v_0v_1) \not \in S_R$ and $c(v_kv_{k -1}) \not \in S_L$. 
	\end{observation}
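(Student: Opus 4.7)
The plan is to handle the Observation's two claims in turn; the first is a rotation argument, while the second is a brief unwinding of Definition \ref{BasicColorSets}.

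For the first claim, I would assume by symmetry that $c(v_0v_k)\in L_{new}$; the case $c(v_0v_k)\in R_{new}$ follows by reversing $P^*$. Note that $v_0v_k$ is indeed an edge of $G$, since $L_{new}\subseteq L$ consists of colors actually appearing on edges at $v_0$, and the only such edge whose color equals $c(v_0v_k)$ is $v_0v_k$ itself. The key step is to rotate $P^*$ across the chord $v_0v_k$: for each $0\le i\le k-1$ consider
\[
Q_i \;=\; v_i\,v_{i-1}\,\dots\,v_0\,v_k\,v_{k-1}\,\dots\,v_{i+1},
\]
which is a Hamilton path of $\{v_0,\dots,v_k\}$ with endpoints $v_i$ and $v_{i+1}$. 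Its edge set is $E(P^*)\setminus\{v_iv_{i+1}\}$ together with $v_0v_k$, so the colors on $Q_i$ form the set $(\{c_1,\dots,c_k\}\setminus\{c_{i+1}\})\cup\{c(v_0v_k)\}$. Since $c(v_0v_k)\in L_{new}=L\setminus\{c_1,\dots,c_k\}$, these $k$ colors are pairwise distinct, and $Q_i$ is a rainbow spanning path. Letting $i$ range over $\{0,1,\dots,k-1\}$ shows that each of $v_1,\dots,v_k$ is a terminal vertex of some such rainbow spanning path, and $v_0$ is already one via $P^*$; hence $T=\{v_0,\dots,v_k\}$ and $|T|=k+1\ge \tfrac{3k}{7}+1.5$. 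This establishes Lemma \ref{terminal_vertices} in this case, so from now on one may assume $c(v_0v_k)\notin L_{new}\cup R_{new}$.

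For the second claim, under this remaining assumption, it suffices to show $c(v_0v_1)=c_1\notin S_R$, because then $c(v_0v_1)\notin L\cap S_R=L_{nice}$. By the definition of $S_R$, every element has the form $c_{j+1}$ for some $0\le j\le k-2$ with $v_kv_j\in E(G)$ and $c(v_kv_j)\in R_{new}$. Since the colors $c_1,\dots,c_k$ along $P^*$ are pairwise distinct, $c_{j+1}=c_1$ forces $j=0$, which would require $c(v_0v_k)\in R_{new}$, contradicting the assumption. The mirror argument, with $S_L$ in place of $S_R$ and $L_{new}$ in place of $R_{new}$, rules out $c(v_kv_{k-1})=c_k\in S_L$ and thus gives $c(v_kv_{k-1})\notin R_{nice}$.

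The only mild subtlety is the boundary of the rotation: for $i=0$ and $i=k-1$ one of the two ``half-paths'' in the expression for $Q_i$ collapses to a single vertex (yielding $Q_0=v_0v_k v_{k-1}\dots v_1$ and $Q_{k-1}=v_{k-1}v_{k-2}\dots v_0 v_k$). In both cases $Q_i$ remains a genuine Hamilton path on $\{v_0,\dots,v_k\}$ and the color bookkeeping is unchanged, so no special treatment is needed. Beyond that, the proof is essentially routine.
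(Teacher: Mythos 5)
Your proof is correct and follows essentially the same rotation argument as the paper: the path $v_iv_{i-1}\ldots v_0v_kv_{k-1}\ldots v_{i+1}$ is rainbow precisely because $c(v_0v_k)\notin\{c_1,\ldots,c_k\}$, and the second claim is the same unwinding of the definitions of $S_L$ and $S_R$. Your extra care at the boundary cases $i=0$ and $i=k-1$ and the observation that $c(v_0v_k)\in L_{new}$ and $c(v_0v_k)\in R_{new}$ are in fact equivalent (so no genuine case split is needed) are fine but do not change the argument.
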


	\begin{claim}
		\label{new_edge}
		If $v_0v_i$ is an edge such that $c(v_0v_i) \in L_{new}$ then $v_{i-1} \in T$.
	\end{claim}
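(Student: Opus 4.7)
The plan is to exhibit an explicit rainbow path of length $k$ on the vertex set $\{v_0, v_1, \ldots, v_k\}$ that has $v_{i-1}$ as one of its endpoints; by definition this is precisely what is needed to place $v_{i-1}$ into $T$. The construction is a standard ``chord rerouting'' of $P^*$ using the new edge $v_0v_i$.

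First, I would observe that the hypothesis $c(v_0v_i) \in L_{new}$ forces $i \ge 2$: if $i=1$ then $c(v_0v_i) = c(v_0v_1) = c_1 \in \{c_1,\ldots,c_k\}$, which contradicts $c(v_0v_i) \in L_{new} = L \setminus \{c_1,\ldots,c_k\}$. (By Observation \ref{vovljumping} we may also assume $i \ne k$, though the construction below does not actually rely on this.) Having ensured $i \ge 2$, I would then consider the path
\[
Q \;:=\; v_{i-1}\,v_{i-2}\,\cdots\, v_1\,v_0\,v_i\,v_{i+1}\,\cdots\, v_k,
\]
obtained by traversing the initial segment of $P^*$ in reverse down to $v_0$, then jumping along the chord $v_0v_i$, and finally continuing along $P^*$ from $v_i$ to $v_k$. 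The sequence of colors along $Q$ is $c_1,c_2,\ldots,c_{i-1},\,c(v_0v_i),\,c_{i+1},\ldots,c_k$, that is, the colors of $P^*$ with $c_i$ replaced by $c(v_0v_i)$. Since $c(v_0v_i) \in L_{new}$ lies outside $\{c_1,\ldots,c_k\}$, these $k$ colors are pairwise distinct, so $Q$ is rainbow.

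Finally, $Q$ is a rainbow path of length $k$ whose vertex set is exactly $\{v_0,\ldots,v_k\}$ and whose terminal vertex is $v_{i-1}$, so $v_{i-1} \in T$ by definition of $T$. There is no real obstacle in this argument; the only point requiring a moment of care is the verification that $i \ge 2$ (so that the reversed initial segment $v_{i-1}\cdots v_0$ is nonempty and $Q$ is a genuine path), and this is immediate from the hypothesis.
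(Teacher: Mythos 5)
Your proof is correct and uses exactly the same construction as the paper: reroute $P^*$ as $v_{i-1}v_{i-2}\cdots v_0 v_i v_{i+1}\cdots v_k$, replacing the color $c_i$ by the new color $c(v_0v_i) \notin \{c_1,\ldots,c_k\}$, which keeps the path rainbow. The extra check that $i \ge 2$ is a fine (if minor) point that the paper leaves implicit.
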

	\begin{proof}
		Consider the path $v_{i-1}v_{i-2} \ldots v_0 v_i v_{i+1} \ldots v_k$. Clearly it is a rainbow path of length $k$ in which $v_{i-1}$ is a terminal vertex.
	\end{proof}
	
	%
	Suppose $v_0v_i$ is an edge such that $c(v_0v_i) \in L_{nice}$. Since $c(v_0v_k) \not \in R_{new}$, by the definition of $L_{nice}$, there exists an integer $j$ (with $1 \le j \le k-2$) such that $c(v_kv_j) \in R_{new}$ and $c(v_0v_i) = c(v_jv_{j+1}) =c_j$.

	\begin{claim}
		\label{nice_edges}
		If $c(v_0v_i) \in L_{nice}$ then $v_{i-1} \in T$ or $v_{i+1} \in T$.
		
		Moreover, let $j$ be an integer (with $1 \le j \le k-2$) such that $c(v_kv_j) \in R_{new}$ and $c(v_0v_i) = c(v_jv_{j+1}) =c_j$. 
		
		If $j \ge i$, then $v_{i-1} \in T$, and if $j < i$ then $v_{i+1} \in T$.
	\end{claim}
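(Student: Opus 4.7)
The plan is to prove the ``moreover'' part (which directly implies the first sentence) by exhibiting, in each of the two cases, an explicit rainbow path of length $k$ on the vertex set $V(P^*) = \{v_0, v_1, \ldots, v_k\}$ with the required terminal vertex, built by stitching three contiguous subpaths of $P^*$ together via the chords $v_0 v_i$ (whose color is $c_{j+1}$) and $v_k v_j$ (whose color lies in $R_{new}$, and hence outside $\{c_1, \ldots, c_k\}$).

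Before choosing the paths, I would record a preliminary observation: proper coloring at $v_i$ forces $c(v_0 v_i) = c_{j+1}$ to differ from both $c_i = c(v_{i-1}v_i)$ and $c_{i+1} = c(v_i v_{i+1})$, so $j \notin \{i-1, i\}$. Consequently ``$j \ge i$'' actually means $j \ge i+1$, and ``$j < i$'' actually means $j \le i-2$; these are exactly the regimes in which the two constructions below live.

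For the case $j \ge i+1$, I would take the path
$$v_{i-1}\, v_{i-2}\, \cdots\, v_0 \; v_i\, v_{i+1}\, \cdots\, v_j \; v_k\, v_{k-1}\, \cdots\, v_{j+1},$$
where the chord $v_0 v_i$ connects the first two blocks and the chord $v_j v_k$ connects the last two. For the case $j \le i-2$, I would symmetrically take
$$v_{i+1}\, v_{i+2}\, \cdots\, v_k \; v_j\, v_{j-1}\, \cdots\, v_0 \; v_i\, v_{i-1}\, \cdots\, v_{j+1},$$
using $v_k v_j$ and then $v_0 v_i$ as the inter-block chords. In each construction, the three index-intervals partition $\{0, 1, \ldots, k\}$ thanks to the strengthened inequality on $j$, so the walk is a genuine Hamiltonian path on $V(P^*)$.

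The remaining step is a short color count. In the first path, the only path-edge omitted is $v_{i-1} v_i$ (of color $c_i$), and it is replaced by the new-color chord $v_k v_j$; so the set of edge-colors is $\bigl(\{c_1, \ldots, c_k\} \setminus \{c_i\}\bigr) \cup \{c(v_k v_j)\}$, which contains exactly $k$ distinct colors. The second path analogously omits $c_{i+1}$ in favor of $c(v_k v_j)$. Both paths are therefore rainbow paths of length $k$ on $V(P^*)$, witnessing $v_{i-1} \in T$ and $v_{i+1} \in T$ respectively. The only real obstacle in writing this out is careful index bookkeeping to confirm that the three blocks are disjoint and that the advertised set of colors is correct; no new structural idea is required beyond the proper-coloring observation made at the outset.
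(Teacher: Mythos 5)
Your proposal is correct and is essentially identical to the paper's own proof: it exhibits the same two stitched paths, with the same color accounting $\{c_1,\ldots,c_k\}\setminus\{c_i\}\cup\{c(v_jv_k)\}$ (resp.\ $\setminus\{c_{i+1}\}$), your second path being the paper's path for $j<i$ written in reverse. The extra observation that $j\notin\{i-1,i\}$ is a harmless refinement the paper leaves implicit.
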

	\begin{proof}
		Observe that since $c(v_0v_i) \in L_{nice} \subset S_R$, we have that $c(v_kv_j) \in R_{new}$ (by definition of $S_R$).
		
		First let $j \ge i$. In this case consider the path $v_{i-1}v_{i-2}\ldots v_0 v_iv_{i+1}\ldots v_jv_kv_{k -1} \ldots v_{j+1}$. It is easy to see that the set of colors of the edges in this path is $\{c_1, c_2, \ldots, c_k\} \setminus \{c_i\} \cup \{c(v_jv_k)\}$. As $c(v_jv_k) \in R_{new}$, the path is rainbow with $v_{i-1}$ as a terminal vertex. So $v_{i-1} \in T$.
		
		If $j < i$, then consider the path $v_{j+1}v_{j+2}\ldots v_iv_0v_1 \ldots v_j v_k v_{k -1} \ldots v_{i+1}$. It is easy to see that the set of colors of the edges in this path is $\{c_1, c_2, \ldots, c_k\} \setminus \{c_{i+1}\} \cup \{c(v_jv_k)\}$, so the path is rainbow again, with $v_{i+1}$ as a terminal vertex. So $v_{i+1} \in T$.
	\end{proof}
	
	\begin{definition}
		\label{aa'}
		Let $b$ be the largest integer such that $c(v_0v_b) \in L_{new}$ and there exists $b' > b$ with $c(v_0v_b') \in L_{new}$. (In other words, $b$ is the second largest and $b'$ is the largest integer $j$ such that $c(v_0v_j) \in L_{new}$.)
		Let $a$ be the smallest integer such that $c(v_kv_a) \in R_{new}$ and there exists $a' < a$ with $c(v_kv_a') \in R_{new}$. (That is, $a$ is the second smallest and $a'$ is the smallest integer $j$ such that $c(v_kv_j) \in R_{new}$.)
	\end{definition}

	\begin{notation}
		For any integers, $0 \le x \le y \le k$, let
		$$T^{x,y} = \{v_i \in T \mid x \le i \le y\},$$
		and $\abs{T^{x,y}} = t^{x,y}$.
		
		Notice that $t = t^{0,k} = 2 + t^{1, k-1}$, as $v_0$ and $v_k$ are both terminal vertices. 
	\end{notation}

Now we will show that if $a > b$, then Lemma \ref{terminal_vertices} holds. Suppose $a > b$. Then by the definition of $a$ and $b$, we have $$\abs{\{ i \mid 2 \le i \le b \text{ and } c(v_0v_i) \in L_{new} \}} = \abs{L_{new}}-1 = l_{new}-1.$$ By Claim \ref{new_edge}, we know that whenever $ c(v_0v_i) \in L_{new}$, we have $v_{i-1} \in T$. This shows that $t^{1,b-1} \ge  l_{new}-1.$ Similarly, by a symmetric argument, we get $t^{a+1,k-1} \ge  r_{new}-1.$ Therefore, $$t = 2+ t^{1, k-1} = 2+ t^{1, b-1} + t^{b,a} + t^{a+1,k-1} \ge 2 + (l_{new}-1) + ( r_{new}-1) = l_{new} + r_{new}.$$

Now using \eqref{lnew} and \eqref{rnew}, we have 
$$t = l_{new} + r_{new} \ge \frac{2k}{7}+2 + \frac{2k}{7}+2 = \frac{4k}{7} + 4,$$ proving Lemma \ref{terminal_vertices}. Therefore, from now on, we always assume $a \le b$. 
	
	\begin{claim}
		\label{bothsides_new}
		If $c(v_0v_i) \in L_{new}$ or $c(v_kv_i) \in R_{new}$, and $a \le i \le b$, then $v_{i-1} \in T$ and $v_{i+1} \in T$.
	\end{claim}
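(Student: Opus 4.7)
The plan is to exploit the symmetry between the two halves of the disjunction: by reversing the orientation of $P^*$ (which swaps $L_{new}\leftrightarrow R_{new}$ and interchanges the roles of $a$ and $b$), it suffices to treat the case $c(v_0v_i)\in L_{new}$ and show that both $v_{i-1}$ and $v_{i+1}$ lie in $T$; the case $c(v_kv_i)\in R_{new}$ then follows identically.

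When $c(v_0v_i)\in L_{new}$, the vertex $v_{i-1}$ already lies in $T$ by Claim~\ref{new_edge}, so the real work is to produce a rainbow path of length $k$ on $\{v_0,\dots,v_k\}$ with $v_{i+1}$ as an endpoint. The strategy is to ``jump twice'': once from $v_k$ back along some edge of color in $R_{new}$ to a landing-index $j<i$, and once from $v_0$ along the edge $v_0v_i$. I would split on whether $a<i$ or $a=i$; both possibilities are compatible with the hypothesis $a\le i$.

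If $a<i$, then both $a'$ and $a$ are strictly less than $i$, and the proper coloring at $v_k$ forces $c(v_kv_{a'})\ne c(v_kv_a)$, so some $j\in\{a',a\}$ satisfies $c(v_kv_j)\ne c(v_0v_i)$. The path
$$v_{i+1}v_{i+2}\cdots v_kv_jv_{j-1}\cdots v_0v_iv_{i-1}\cdots v_{j+1}$$
uses every vertex of $P^*$ exactly once, picks up the colors $\{c_1,\dots,c_k\}\setminus\{c_{j+1},c_{i+1}\}$ from $P^*$, and adds the two mutually distinct, non-$P^*$ colors $c(v_kv_j)$ and $c(v_0v_i)$, so it is rainbow and ends at $v_{i+1}$. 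If instead $a=i$, then $c(v_kv_i)\in R_{new}$, and the single path
$$v_{i+1}v_{i+2}\cdots v_kv_iv_0v_1\cdots v_{i-1}$$
uses $\{c_1,\dots,c_k\}\setminus\{c_i,c_{i+1}\}$ together with the two new colors $c(v_kv_i)$ and $c(v_0v_i)$; these jump-colors are automatically distinct because both edges meet $v_i$. This single path simultaneously witnesses $v_{i+1}\in T$ and $v_{i-1}\in T$.

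The main technical obstacle is avoiding a color collision between the two jump edges $v_0v_i$ and $v_kv_j$: in the generic sub-case it is neutralized by having two candidates $a'$ and $a$ to pick from, while the degenerate sub-case $a=i$ forces a different path that uses the edge $v_kv_i$ itself, where properness at $v_i$ provides the required distinctness for free. Beyond that, the verification is a routine count of colors along each of the three straight segments of $P^*$ traversed by the surgical path.
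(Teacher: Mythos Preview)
Your proof is correct and follows essentially the same approach as the paper: the paper also reduces by symmetry to the case $c(v_0v_i)\in L_{new}$, invokes Claim~\ref{new_edge} for $v_{i-1}$, and for $v_{i+1}$ splits on whether $i=a$ (handled via the $R_{new}$-edge at $v_i$) or $i>a$ (handled by choosing $a^*\in\{a,a'\}$ with $c(v_kv_{a^*})\ne c(v_0v_i)$ and building the same reroute). Your path in the $a<i$ sub-case is the paper's path read from the other endpoint, and your treatment of the degenerate case $a=i$ with a single path witnessing both endpoints is a harmless cosmetic variation of the paper's appeal to the symmetric version of Claim~\ref{new_edge}.
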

	\begin{proof}[Proof of Claim]
		First suppose $c(v_0v_i) \in L_{new}$. Then by Claim \ref{new_edge}, $v_{i-1} \in T$. We want to show that $v_{i+1} \in T$.
		
		Observe that if $i = a$, then by Claim \ref{new_edge} again, we have $v_{i+1} \in T$ because $v_kv_i \in R_{new}$. So let us assume $a < i$ and show that $v_{i+1} \in T$. Notice that there exists $a^* \in \{a, a'\}$ (see Definition \ref{aa'} for the definition of $a$ and $a'$) such that $c(v_0v_i) \not = c(v_{a^*}v_k)$. Now consider the path $v_{a^*+1}v_{a^*+2} \ldots v_i v_0 v_1 \ldots v_{a^*}v_k v_{k -1} \ldots  v_{i+1}$. The set of colors of the edges in this path are $\{c_1, c_2, \ldots, c_k\} \setminus \{c_{a^*+1}, c_{i+1}\} \cup \{c(v_0v_i), c(v_{a^*}v_k)\}$, and it is easy to check that all the colors are different, so the path is rainbow with $v_{i+1}$ as a terminal vertex. 
		
		Now suppose $c(v_kv_i) \in R_{new}$. Then a similar argument shows that $v_{i-1} \in T$ and $v_{i+1} \in T$ again, completing the proof of the claim.
	\end{proof}
	
	Now we introduce some helpful notation. 
	
	\begin{notation}
		For any integers, $0 \le x \le y \le k$, let 
		$$L_{nice}^{x,y} = \{c(v_0v_i) \in L_{nice} \mid x \le i \le y\},$$ 
		$$R_{nice}^{x,y} = \{c(v_kv_i) \in R_{nice} \mid x \le i \le y\},$$ $$L_{new}^{x,y} = \{c(v_0v_i) \in L_{new} \mid x \le i \le y\},$$ $$R_{new}^{x,y} = \{c(v_kv_i) \in R_{new} \mid x \le i \le y\},$$ 
		$$T^{x,y} = \{v_i \in T \mid x \le i \le y\}.$$
		
		Moreover, let $\abs{L_{nice}^{x,y}} = l_{nice}^{x,y}$, $\abs{R_{nice}^{x,y}} = r_{nice}^{x,y}$, $\abs{L_{new}^{x,y}} = l_{new}^{x,y}$,  $\abs{R_{new}^{x,y}} = r_{new}^{x,y}$. 	
	\end{notation}

	Note that by definition of $a$ and $b$, $l_{new} = l_{new}^{0,a-1} + l_{new}^{a,b} + 1$ and $r_{new} = 1+ r_{new}^{a,b} + r_{new}^{b+1,l}$. Using Observation \ref{vovljumping}, for any integer $z$, we have the following:
	\begin{equation}
	\label{0to2nice}
	L_{nice}^{0,z}  = L_{nice}^{2,z} \text{ and } R_{nice}^{z, k} = R_{nice}^{z, k-2}.
	\end{equation}
	Moreover, by definition of $L_{new}$ and $R_{new}$, we have
	\begin{equation}
	\label{0to2new}
	L_{new}^{0,z}  = L_{new}^{2,z} \text{ and } R_{new}^{z,k}  = R_{new}^{z,k-2}.
	\end{equation}
	
Informally speaking, Claim \ref{nice_edges} and Claim \ref{bothsides_new} assert that each edge $e = v_0v_i$ such that $c(v_0v_i) \in L_{new} \cup L_{nice}$ ``creates" a terminal vertex $x = v_{i-1} \in T$ or $ x = v_{i+1} \in T$ (or sometimes both). Similarly, each edge $e = v_kv_i$ such that $c(v_kv_i) \in R_{new} \cup R_{nice}$ ``creates" a terminal vertex $x = v_{i-1} \in T$ or $x = v_{i+1} \in T$ (or both). In the next two claims, by double counting the total number of such pairs $(e,x)$, we prove lower bounds on the number of terminal vertices in different ranges (i.e., $t^{0,a-1}, t^{b+1,k}$ and $t^{a,b}$), in terms of $l_{new}, r_{new}, l_{nice}$ and $r_{nice}$.
	
	\begin{claim}
		\label{0toa-1andb+1tok}
		We have,
		$$t^{0,a-1} \ge \frac{1}{2} \left(l_{nice}^{0,a}+l_{new}^{0,a}+\frac{r_{nice}^{0,a}}{2}\right),$$
		and 
	   $$t^{b+1,k} \ge \frac{1}{2} \left(r_{nice}^{b,k}+r_{new}^{b,k}+\frac{l_{nice}^{b,k}}{2}\right). $$
	\end{claim}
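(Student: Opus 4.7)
The plan is to establish the first inequality via weighted double counting; the second inequality then follows by the obvious symmetric argument (swap $v_0 \leftrightarrow v_k$ and interchange $[0,a]$ with $[b,k]$). I focus on the first.

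I will associate to each relevant edge a terminal vertex it ``creates'' via Claims \ref{new_edge}, \ref{nice_edges}, and their symmetric versions: each $L_{new}^{0,a}$ edge $v_0v_i$ creates $v_{i-1}$; each $L_{nice}^{0,a}$ edge creates $v_{i-1}$ if its associated index $j$ satisfies $j \ge i$, and $v_{i+1}$ otherwise; each $R_{nice}^{0,a}$ edge $v_kv_i$ creates $v_{i-1}$ or $v_{i+1}$ analogously. A short case check shows the created vertex lies in $T^{0,a-1}$ in all but one exceptional case: an $L_{nice}^{0,a}$ edge with $j < i$ forces $j = a'$ by the definition of $a$, hence $c(v_0v_i) = c_{a'+1}$, which pins down at most one edge. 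I then assign weight $1$ to each $L_{new}^{0,a} \cup L_{nice}^{0,a}$ edge and weight $\frac{1}{2}$ to each $R_{nice}^{0,a}$ edge, so that the total weight equals $l_{new}^{0,a} + l_{nice}^{0,a} + \frac{r_{nice}^{0,a}}{2}$.

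The core step is to bound the total weight received by each $v_s \in T^{0,a-1}$. The only edges that can create $v_s$ are $v_0v_{s \pm 1}$ and $v_kv_{s \pm 1}$. Crucially, for the ``right-creating'' $L$-edge $v_0v_{s-1}$ to actually create $v_s$ requires $c(v_0v_{s-1}) = c_{a'+1}$, which pins down $s$ uniquely. Thus outside this lone exception, each $v_s$ receives $L$-weight at most $1$ and $R$-weight at most $2 \cdot \frac{1}{2} = 1$, for total weight at most $2$. Summing over $v_s \in T^{0,a-1}$ yields $l_{new}^{0,a} + l_{nice}^{0,a} + \frac{r_{nice}^{0,a}}{2} \le 2\, t^{0,a-1}$, which is the desired inequality.

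The main obstacle is the multiplicity analysis. A naive count allows up to four creating edges per vertex (two from $v_0$, two from $v_k$), yielding weight $3$. To achieve the needed bound of $2$, one must use the structural constraint imposed by the definitions of $a$ and $a'$ to rule out the simultaneous existence of a ``left-creating'' and a ``right-creating'' $L$-edge for each vertex (except in the lone case identified above), and then handle the exceptional vertex cleanly so that its excess weight does not inflate the final sum. A likely route is to exploit the full statement of the proof of Claim \ref{nice_edges}, in which the ``bad'' $L_{nice}$ edge also produces the auxiliary endpoint $v_{j+1} = v_{a'+1}$, providing a compensating extra production that offsets the double-count at the exceptional vertex.
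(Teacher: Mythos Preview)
Your weighted double-count is a reasonable instinct, and your identification of the exceptional edge (the unique $v_0v_i$ with $c(v_0v_i)=c_{a'+1}$) is exactly right. But the proposal has a genuine gap: you do not close the argument at the exceptional vertex, and the compensation you suggest via $v_{a'+1}$ is not guaranteed to work --- $v_{a'+1}$ need not lie in $T^{0,a-1}$ (it fails when $a'=a-1$), and even when it does, it may already be receiving weight from $v_0v_{a'+2}$ and $v_kv_{a'+2}$, so redirecting the exceptional weight there can still overflow the cap of~$2$. As written, your count only yields $l_{new}^{0,a}+l_{nice}^{0,a}+\tfrac{1}{2}r_{nice}^{0,a}\le 2t^{0,a-1}+1$.

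The paper sidesteps this entirely by \emph{not} doing a single combined count. It proves two separate lower bounds for the same quantity $t^{0,a-1}$ and adds them. First, from $L$-edges alone: for every $i$ with $c(v_0v_i)\in L_{new}^{2,a}\cup L_{nice}^{2,a}$ except the one exceptional $i$, Claim~\ref{nice_edges} (respectively Claim~\ref{new_edge}) gives $v_{i-1}\in T^{1,a-1}$; distinct $i$'s give distinct $v_{i-1}$'s, so $t^{1,a-1}\ge l_{nice}^{0,a}+l_{new}^{0,a}-1$, and adding the terminal vertex $v_0$ absorbs the $-1$ to give $t^{0,a-1}\ge l_{nice}^{0,a}+l_{new}^{0,a}$. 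Second, from $R$-edges alone: a pigeonhole on whether the created terminal is $v_{i-1}$ or $v_{i+1}$ gives $t^{0,a-1}\ge r_{nice}^{0,a}/2$ directly (the index shift is again absorbed using $v_0\in T$). Summing these two inequalities yields the claim.

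So the factor of $\tfrac{1}{2}$ in front of the bracket does not come from a per-vertex weight cap of~$2$; it comes from adding two independent bounds for $t^{0,a-1}$. Decoupling the $L$- and $R$-contributions in this way is what eliminates the multiplicity bookkeeping you are struggling with.
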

	
\begin{proof}[Proof of Claim]
		By Claim \ref{nice_edges}, and by the fact that there is only one $j$ such that $c(v_kv_j) \in R_{new}^{0,a-1}$, it is easy to see that for all but at most one $i$, we have the following: if $c(v_0v_i) \in L_{nice}^{0,a} = L_{nice}^{2,a}$ (equality here follows from \eqref{0to2nice}), then $v_{i-1} \in T^{1, a-1}$.  So there are at least $l_{nice}^{2,a}-1$ pairs $(v_0v_i,x)$ such that $c(v_0v_i) \in L_{nice}^{2,a}$ and $x = v_{i-1} \in T^{1, a-1}$.
		
		If $c(v_0v_i) \in L_{new}^{0,a} = L_{new}^{2,a}$ (equality here follows from \eqref{0to2new}), then by Claim \ref{new_edge}, $v_{i-1} \in T^{1,a-1}$. So there are $l_{new}^{2,a}$ pairs $(v_0v_i,x)$ such that $c(v_0v_i) \in L_{new}^{2,a}$ and $x = v_{i-1} \in T^{1,a-1}$. 
		
		Adding the previous two bounds, the total number of pairs $(v_0v_i,x)$ such that $c(v_0v_i) \in L_{nice}^{0,a} \cup  L_{new}^{0,a} =  L_{nice}^{2,a} \cup  L_{new}^{2,a}$ and $x = v_{i-1} \in T^{1,a-1}$, is at least $l_{nice}^{2,a}-1+l_{new}^{2,a}$. This implies $t^{1,a-1} \ge l_{nice}^{2,a}-1+l_{new}^{2,a}$. Therefore, using that $v_0$ is also a terminal vertex, we have 
		\begin{equation}
		\label{eq1}
		t^{0,a-1} \ge l_{nice}^{2,a}+l_{new}^{2,a}.
		\end{equation}

		If $c(v_kv_i) \in R_{nice}^{0,a-1}$, then by Claim \ref{nice_edges}, there is a vertex $x \in \{v_{i-1}, v_{i+1}\}$ such that $x \in T$. So the number of  pairs $(v_kv_i, x)$ such that  $c(v_kv_i) \in R_{nice}^{0,a-1}$, $x \in \{v_{i-1}, v_{i+1}\}$ and $x \in T$, is at least $r_{nice}^{0,a-1}$. By the pigeon-hole principle, either the number of  pairs $(v_kv_i, v_{i-1})$ with $c(v_kv_i) \in R_{nice}^{0,a-1}$, $v_{i-1} \in T$, or the number of pairs $(v_kv_i, v_{i+1})$ with $c(v_kv_i) \in R_{nice}^{0,a-1}$,$v_{i+1} \in T$, is at least $r_{nice}^{0,a-1}/2$. In the first case, we get $t^{0, a-2} \ge r_{nice}^{0,a-1}/2$ and in the second case, we get $t^{1, a} \ge r_{nice}^{0,a-1}/2$. As $t^{0,a-1} \ge t^{0, a-2}$ and $t^{0,a-1} \ge t^{1, a}$, in both cases we have, 
		\begin{equation}
		\label{eq2}
		t^{0,a-1} \ge \frac{r_{nice}^{0,a-1}}{2}.
		\end{equation}
		
		Therefore, adding up \eqref{eq1} and \eqref{eq2}, we get
		\begin{equation*}
		\label{eq3}
		2t^{0,a-1} \ge l_{nice}^{2,a}+l_{new}^{2,a}+\frac{r_{nice}^{0,a-1}}{2} = l_{nice}^{0,a}+l_{new}^{0,a}+\frac{r_{nice}^{0,a}}{2}. 
		\end{equation*}
		Note that the equality follows from \eqref{0to2nice} and the fact that $r_{nice}^{0,a-1} = r_{nice}^{0,a}$ because $c(v_kv_a) \in R_{new}.$ By a symmetric argument, we have
		
		\begin{equation*}
		\label{eq4}
		2t^{b+1,k} \ge r_{nice}^{b,k-2}+r_{new}^{b,k-2}+\frac{l_{nice}^{b+1,k}}{2} = r_{nice}^{b,k}+r_{new}^{b,k}+\frac{l_{nice}^{b,k}}{2}. 
		\end{equation*}
		This finishes the proof of the claim.
	\end{proof}
		
		Now we prove a lower bound on $t^{a,b}$.
		
		\begin{claim}
			\label{atob}
			$$t^{a,b} \geq \frac{1}{4} \left( l_{nice}^{a+1, b-1} + r_{nice}^{a+1, b-1}+ 2(l_{new}^{a+1, b}+ r_{new}^{a, b-1}) -2 \right).$$
		\end{claim}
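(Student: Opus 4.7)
The plan is to mimic the double-counting strategy of Claim \ref{0toa-1andb+1tok}, but now for the middle range $[a,b]$. I would count pairs $(e,x)$ where $e$ is an edge at $v_0$ or $v_k$ whose color lies in $L_{new} \cup L_{nice} \cup R_{new} \cup R_{nice}$ (restricted to suitable sub-ranges), and $x \in T^{a,b}$ is a terminal vertex that $e$ forces into $T$ via Claim \ref{nice_edges} or Claim \ref{bothsides_new}. Since the range is bounded on both sides (unlike in Claim \ref{0toa-1andb+1tok}), the "new" edges at the interior will contribute \emph{two} terminal vertices each, which is the source of the factor of $2$ in front of the new-terms in the statement.

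For the contribution of the $L$-side, I would use Claim \ref{bothsides_new}: for every $i$ with $a+1 \le i \le b-1$ and $c(v_0 v_i) \in L_{new}$, both $v_{i-1}$ and $v_{i+1}$ lie in $T^{a,b}$, yielding $2 l_{new}^{a+1, b-1}$ pairs. The one boundary index $i = b$ (which is always in $L_{new}^{a+1,b}$ by the definition of $b$) only guarantees the single pair $(v_0 v_b, v_{b-1})$ inside $T^{a,b}$, since $v_{b+1}$ need not lie in $T^{a,b}$. Using $l_{new}^{a+1,b} = l_{new}^{a+1,b-1} + 1$, this gives a total of $2l_{new}^{a+1,b} - 1$ pairs. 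The symmetric count for $R_{new}^{a,b-1}$ (with boundary at $i = a$) yields $2 r_{new}^{a,b-1} - 1$. For the "nice" edges, Claim \ref{nice_edges} guarantees at least one terminal vertex in $\{v_{i-1}, v_{i+1}\}$; for $i \in [a+1, b-1]$ both candidate indices lie in $[a,b]$, so this produces $l_{nice}^{a+1, b-1}$ pairs from the $L$-side and $r_{nice}^{a+1, b-1}$ from the $R$-side.

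Finally, I would bound the multiplicity on the other side: a fixed terminal vertex $v_x \in T^{a,b}$ can arise in a pair only through one of the at most four edges $v_0 v_{x-1}, v_0 v_{x+1}, v_k v_{x-1}, v_k v_{x+1}$, so each element of $T^{a,b}$ is charged at most $4$ times. Summing the four contributions and dividing by $4$ yields
$$ 4 t^{a,b} \;\ge\; (2 l_{new}^{a+1,b} - 1) + (2 r_{new}^{a,b-1} - 1) + l_{nice}^{a+1,b-1} + r_{nice}^{a+1,b-1}, $$
which rearranges to the inequality in the claim. The only delicate point is the asymmetry in index ranges between the new- and nice-terms in the statement; this is exactly the boundary bookkeeping at $i = a$ and $i = b$, where the forced terminal vertex on the "outside" of $[a,b]$ is lost, producing the $-2$ correction.
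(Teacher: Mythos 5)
Your proof is correct and is essentially the paper's argument: the same double count of pairs (edge, forced terminal vertex) via Claims \ref{nice_edges} and \ref{bothsides_new}, the same multiplicity bound of four pairs per vertex of $T^{a,b}$, and the same boundary loss of $2$ at $i=a$ and $i=b$ (the paper simply keeps the two ``outside'' pairs $(v_0v_b,v_{b+1})$ and $(v_kv_a,v_{a-1})$ in its set $S$ and subtracts $2$ afterwards, which is equivalent to your bookkeeping $2l_{new}^{a+1,b}-1$ and $2r_{new}^{a,b-1}-1$). The only caveat is the degenerate case $a=b$, where your identity $l_{new}^{a+1,b}=l_{new}^{a+1,b-1}+1$ fails, but there the claimed inequality is vacuous since its right-hand side is negative.
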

		
		\begin{proof}[Proof of Claim]
		
		Let us construct a set $S$ of pairs $(e,x)$ such that $e \in L_{in} \cup R_{in}$ and $x \in T$ with certain properties. 
		
		If $c(e) \in L_{nice}^{a+1,b-1} \cup R_{nice}^{a+1, b-1}$, then by Claim \ref{nice_edges}, there is a vertex $x \in \{v_{i-1}, v_{i+1}\}$ such that $x \in T$ (in particular, $x \in T^{a,b}$). Add all such pairs $(e,x)$ to $S$. Therefore, the number of pairs $(e,x)$ added to $S$ so far, is $l_{nice}^{a+1, b-1} + r_{nice}^{a+1, b-1}$.
		
		
		For each $e$ such that $c(e) \in L_{new}^{a+1, b} \cup R_{new}^{a, b-1}$, we have both $v_{i-1}, v_{i+1} \in T$ by Claim \ref{bothsides_new}; we add both the pairs $(e, v_{i-1})$ and $(e, v_{i+1})$ to $S$. Therefore the number of pairs $(e,x)$ added to $S$ in this step is $2(l_{new}^{a+1, b}+ r_{new}^{a, b-1})$. Thus, $$\abs{S} = l_{nice}^{a+1, b-1} + r_{nice}^{a+1, b-1}+ 2(l_{new}^{a+1, b}+ r_{new}^{a, b-1}).$$ Note that except the pairs $(v_kv_b, v_{b+1})$, $(v_0v_a, v_{a-1})$, all other pairs $(e,x)$ in $S$ are such that $x \in T^{a,b}$. Moreover, for each $x \in T^{a,b}$, there are at most four pairs $(e,x)$ in $S$. Therefore, we have
		
		\begin{equation*}
		\label{eq5}
		4t^{a,b} \geq \abs{S}-2 \ge l_{nice}^{a+1, b-1} + r_{nice}^{a+1, b-1}+ 2(l_{new}^{a+1, b}+ r_{new}^{a, b-1}) -2,
		\end{equation*}
		finishing the proof of the claim.
	\end{proof}
	
	%
	
	
	By Claim \ref{0toa-1andb+1tok} and Claim \ref{atob}, we have 
	$$  2(2t^{0,a-1} + 2t^{b+1,l})+ 4t^{a,b} \ge  2\left(l_{nice}^{0,a}+l_{new}^{0,a}+\frac{r_{nice}^{0,a}}{2} + r_{nice}^{b,k}+r_{new}^{b,k}+\frac{l_{nice}^{b,k}}{2}\right)$$ $$+ l_{nice}^{a+1, b-1}+ r_{nice}^{a+1, b-1}+ 2(l_{new}^{a+1, b}+ r_{new}^{a, b-1}) -2   $$
	This implies, 
	$$4t \ge l_{nice} + r_{nice} + 2l_{new}^{0,b} + 2r_{new}^{a, l}+ l_{nice}^{0,a}+r_{nice}^{b,l}-2.$$
	By the definition of $a$ and $b$, $l_{new}^{0,b} = l_{new} -1$ and $r_{new}^{a, l} = r_{new} - 1$. So, we get
	$$4t \ge l_{nice} + r_{nice} + 2l_{new} + 2r_{new}+ l_{nice}^{0,a}+r_{nice}^{b,l}-6$$ $$ \ge l_{nice}+r_{nice} + 2(l_{new} + r_{new})-6. $$
	
	Now by Claim \ref{niceclaim} and inequalities \eqref{lnew} and \eqref{rnew}, we get that $$4t \ge \frac{4k}{7}+4 + 2 \left(\frac{2k}{7}+2 + \frac{2k}{7}+2 \right) -6 = \frac{12k}{7}+6.$$ Therefore, $$ t \ge \frac{3k}{7}+1.5,$$ completing the proof of Lemma \ref{terminal_vertices}.

	\subsection{Finding a large subset of vertices with few edges incident to it}
\label{finding_the_matching}

	Now we define an auxiliary graph $H$ with the vertex set $V(H) = T$ and edge set $E(H)$ such that $ab \in E(H)$ if and only if there is a rainbow path $P$ in $G$ with $a$ and $b$ as its terminal vertices and $V(P) = V(P^*) = \{v_0, v_1, \ldots, v_k\}$. 
	
	\begin{claim}
		\label{min_degree_of_H}
		The degree of every vertex $u$ in $H$ is at least $\frac{2k}{7}+2$.
	\end{claim}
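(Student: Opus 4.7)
The plan is to apply inequality \eqref{lnew} not to $P^*$ but to an arbitrary longest rainbow path with vertex set $V(P^*)$, exploiting the fact that the arguments of Subsection \ref{basic_claims} (in particular the lower bound \eqref{lnew} on $|L_{new}|$ and Claim \ref{new_edge}) depend only on the path in question being a longest rainbow path in $G$, not on its specific identity as $P^*$. The key conceptual observation is that when the bound $l_{new} \ge \frac{2k}{7}+2$ is combined with Claim \ref{new_edge}, it actually shows that the terminal $v_k$ of $P^*$ has at least $\frac{2k}{7}+2$ neighbors in $H$; transferring this observation from $P^*$ to a rainbow path with $u$ as a terminal will yield the claim for $u$.

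Concretely, given $u \in T$, I would first pick a rainbow path $P_u = x_0 x_1 \ldots x_k$ with $V(P_u) = V(P^*)$ and $x_k = u$, and let $u' := x_0$ denote the other terminal of $P_u$. Defining $L^{P_u}$, $L^{P_u}_{new}$, $L^{P_u}_{old}$ relative to $P_u$ exactly as in Definition \ref{BasicColorSets}, the argument leading to \eqref{lnew} (using $d(u') \ge \frac{9k}{7}+2$ and $|L^{P_u}_{old}| \le k$) yields
$$|L^{P_u}_{new}| \ge \frac{2k}{7}+2.$$
Here we are implicitly using that $P_u$ is also a longest rainbow path in $G$ (since $|V(P_u)| = k+1$ and no rainbow path of length $k+1$ exists in $G$), which is what guarantees $L^{P_u}_{out} \subseteq \{c(x_{j-1}x_j) : 1 \le j \le k\}$ and hence that $L^{P_u}_{new}$ consists precisely of colors of edges from $u'$ to other vertices of $V(P^*)$.

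Finally, for each edge $u'x_i$ whose color lies in $L^{P_u}_{new}$, the analog of Claim \ref{new_edge} for $P_u$ shows that the rerouted path $x_{i-1}x_{i-2}\ldots x_0 x_i x_{i+1}\ldots x_k$ is rainbow, uses exactly the vertices of $V(P_u) = V(P^*)$, and has $x_{i-1}$ and $x_k = u$ as its terminals. This witnesses that $u x_{i-1} \in E(H)$, and distinct edges at $u'$ yield distinct indices $i$ and hence distinct neighbors $x_{i-1}$ of $u$ in $H$, so the degree of $u$ in $H$ is at least $|L^{P_u}_{new}| \ge \frac{2k}{7}+2$. I expect no genuine obstacle: the only point to verify carefully is that the definitions and basic claims of Subsection \ref{basic_claims} apply verbatim to $P_u$, which they do by the two properties above (longest rainbow path, and both endpoints having degree $\ge \frac{9k}{7}+2$).
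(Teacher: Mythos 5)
Your proof is correct and is essentially the paper's own argument, just with the path oriented the other way: the paper takes $u$ as the first vertex $u_0$ and uses the bound $r_{new}\ge \frac{2k}{7}+2$ at the opposite endpoint $u_k$, while you take $u=x_k$ and use $l_{new}\ge \frac{2k}{7}+2$ at $x_0$, rerouting through new-colored chords in exactly the same way. The points you flag for verification (that the basic claims apply to any longest rainbow path on $V(P^*)$, and the minimum-degree assumption at the other endpoint) are precisely how the paper justifies the step as well.
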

	\begin{proof}[Proof of Claim]
		As $u \in V(H) = T$, $u$ is a terminal vertex. So there is a rainbow path $P = u_0u_1 \ldots u_k$ in $G$ such that $u_0 =u$ and $\{u_0, u_1, \ldots, u_k \} = \{v_0, v_1, \ldots, v_k \}$. We define the sets $L, R, L_{new}, R_{new}$ corresponding to $P$ in the same way as we did for $P^*$ (in Definition \ref{BasicColorSets}). Moreover, since $P^*$ was defined as an arbitrary rainbow path of length $k$, \eqref{rnew} holds for $P$ as well -- i.e., $\abs{R_{new}} = r_{new} \ge  \frac{2k}{7}+2$. We claim that if $u_ku_j$ is an edge in $G$ such that $c(u_ku_j) \in R_{new}$, then $uu_{j+1} \in E(H)$. Indeed, consider the path $u_0u_1\ldots u_ju_ku_{k -1} \ldots u_{j+1}$. This is clearly a rainbow path with terminal vertices $u = u_0$ and $u_{j+1}$. So $u$ and $u_{j+1}$ are adjacent in $H$, as required. This shows that degree of $u$ in $H$ is at least $r_{new} \ge \frac{2k}{7}+2$, as desired.
	\end{proof}
	
	Size of a matching is defined as the number of edges in it. The following proposition is folklore.
	
	\begin{prop}
		\label{matching}
		Any graph $G$ with minimum degree $\delta(G)$ has a matching of size 
		$$\min \left \{\delta(G), \floor{\frac{V(G)}{2}} \right \}.$$
	\end{prop}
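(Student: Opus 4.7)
The plan is to proceed by contradiction. Let $M$ be a maximum matching in $G$ and suppose that $|M| < \min\{\delta(G), \floor{|V(G)|/2}\}$. Since $|M| < \floor{|V(G)|/2}$, we have $2|M| < |V(G)|$, so there exist at least two vertices $u, v$ not covered by $M$. By maximality of $M$, we have $uv \notin E(G)$ (otherwise $M \cup \{uv\}$ would be a strictly larger matching), and every neighbor of $u$ or $v$ in $G$ must be covered by $M$ (otherwise such an edge could simply be added to $M$).

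The heart of the argument is the following claim: for every matching edge $xy \in M$, at most two of the four possible edges between $\{u,v\}$ and $\{x,y\}$ are present in $G$. Indeed, if both $ux$ and $vy$ were edges (or symmetrically, both $uy$ and $vx$), then the augmentation $(M \setminus \{xy\}) \cup \{ux, vy\}$ would be a strictly larger matching, contradicting the maximality of $M$. A short case check on the remaining configurations confirms the bound of $2$.

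Summing this bound over all $|M|$ matching edges, and using that every neighbor of $u$ or $v$ lies in $V(M)$, we obtain
$$d(u) + d(v) \le 2|M|.$$
Since $d(u), d(v) \ge \delta(G)$, this yields $|M| \ge \delta(G)$, contradicting the assumption $|M| < \delta(G)$.

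The only step requiring any real thought is the augmentation argument giving the bound of $2$ incidences per matching edge; the rest is bookkeeping and the choice of two uncovered vertices, which is precisely where the hypothesis $|M| < \floor{|V(G)|/2}$ is used.
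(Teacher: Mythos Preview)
Your argument is correct. The paper does not actually prove this proposition; it simply declares it folklore and moves on, so there is nothing to compare against. Your proof is the standard one: the key point is that for each matching edge $xy$, the pairs $\{ux,vy\}$ and $\{uy,vx\}$ can each contribute at most one edge (otherwise $u\,x\,y\,v$ or $u\,y\,x\,v$ is an $M$-augmenting path), which gives the bound of two incidences per matching edge and hence $d(u)+d(v)\le 2|M|$.
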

	
	We know that $\delta(H) \ge \frac{2k}{7}+2$ by Claim \ref{min_degree_of_H}. Moreover $\abs{V(H)} = \abs{T} = t$. So applying Proposition \ref{matching} for the graph $H$ and using Lemma  \ref{terminal_vertices}, we obtain that the graph $H$ contains a matching $M$ of size 
	
	\begin{equation}
	\label{matching_size}
	m := \min \left \{\frac{2k}{7}+2, \floor{\frac{t}{2}}\right \} \ge \frac{3k}{14}.
	\end{equation}
	
	Let the edges of $M$ be $a_1b_1, a_2b_2, \ldots, a_mb_m$.  
	Moreover, let $$n_i = \abs{\{xy \mid xy \not \in E(G), x \in \{a_i, b_i\} \text{ and } y \in \{v_0, v_1, v_2, \ldots, v_k\} \setminus \{a_i, b_i\} \}}.$$
	
	\begin{claim}
		\label{gm}
		The number of edges in the subgraph of $G$ induced by $M$ is $$\abs{E(G[M])} \ge \binom{2m}{2} - \left(\sum_{i=1}^m \frac{n_i}{2} + m\right) = 2m^2-2m-\sum_{i=1}^m \frac{n_i}{2}.$$
	\end{claim}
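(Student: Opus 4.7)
The plan is to upper bound the number of non-edges of $G$ with both endpoints in $M = \{a_1, b_1, \ldots, a_m, b_m\}$ and subtract this from the $\binom{2m}{2}$ total pairs.

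I would split the unordered pairs of vertices in $M$ into two types. The first type consists of the $m$ \emph{matching pairs} $\{a_i, b_i\}$, for which I trivially bound the number of non-edges by $m$. The second type consists of \emph{cross pairs} $\{x, y\}$ with $x \in \{a_i, b_i\}$, $y \in \{a_j, b_j\}$, and $i \ne j$. For the cross pairs I would apply a double counting argument using the quantities $n_i$: if $\{x, y\}$ is a cross non-edge, then since $M$ is a matching in $H$ (so its vertices are all distinct) and $M \subseteq V(P^*) = \{v_0, v_1, \ldots, v_k\}$, we have $y \in \{v_0, v_1, \ldots, v_k\} \setminus \{a_i, b_i\}$, so this non-edge is counted in $n_i$; symmetrically it is counted in $n_j$. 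Hence each cross non-edge is counted at least twice in $\sum_{i=1}^m n_i$, which gives at most $\sum_{i=1}^m \frac{n_i}{2}$ cross non-edges.

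Summing the two contributions, the total number of non-edges of $G$ with both endpoints in $M$ is at most $\sum_{i=1}^m \frac{n_i}{2} + m$, so
$$\abs{E(G[M])} \ge \binom{2m}{2} - \left(\sum_{i=1}^m \frac{n_i}{2} + m\right) = 2m^2 - 2m - \sum_{i=1}^m \frac{n_i}{2},$$
matching the claim. This is a direct double-counting argument and I do not anticipate any real obstacle; the only point that needs care is verifying that each cross non-edge is genuinely counted twice by the $n_i$'s, which rests precisely on the disjointness of the sets $\{a_i, b_i\}$ guaranteed by $M$ being a matching in $H$.
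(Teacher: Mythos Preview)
Your proposal is correct and follows essentially the same approach as the paper: both arguments upper bound the non-edges inside $M$ by observing that each non-edge $\{x,y\}$ with $x,y\in V(M)$ is counted twice in $\sum_i n_i$ unless it is one of the $m$ matching pairs $\{a_i,b_i\}$, yielding at most $\sum_i n_i/2 + m$ non-edges. Your explicit split into matching pairs versus cross pairs is just a slightly more detailed phrasing of the paper's one-sentence double count.
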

	\begin{proof}[Proof of Claim]
		Note that the sum $\sum_i n_i$ counts each pair $xy \not \in E(G)$ with $x,y \in V(M)$ exactly twice unless $xy = a_ib_i$ for some $i$. Therefore, the number of pairs $xy \not \in E(G)$ in the subgraph of $G$ induced by $M$ is at most $\sum_i \frac{n_i}{2} + m$. Thus the number of edges of $G$ in the subgraph induced by $M$ is at least $\binom{2m}{2} - (\sum_i \frac{n_i}{2} + m)$, which implies the desired claim.
	\end{proof}
	
	\begin{claim}
		\label{aibi}
		The sum of degrees of $a_i$ and $b_i$ in $G$ is at most $3k - \frac{n_i}{2}$.
	\end{claim}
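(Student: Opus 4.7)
The plan is to use that $a_ib_i\in E(H)$, which by the definition of $H$ provides a rainbow path $P$ of length $k$ in $G$ with terminals $a_i,b_i$ and $V(P)=\{v_0,\dots,v_k\}$. I would then re-apply the notation of Definition \ref{BasicColorSets} to $P$, with $a_i$ and $b_i$ playing the roles of $v_0$ and $v_k$ respectively; throughout what follows, $l,r,l_{in},\dots,r_{old}$ refer to $P$, so $d(a_i)=l$ and $d(b_i)=r$. The strategy is to bound $l+r$ directly by combining three ingredients: an exact identity for $l_{in}+r_{in}$ in terms of $n_i$, Claim \ref{claimzero} applied to $P$, and the trivial upper bounds on $l_{old},r_{old}$.

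For $l_{in}+r_{in}$, I would set $\epsilon=1$ if $a_ib_i\in E(G)$ and $\epsilon=0$ otherwise, and split $n_i=n_i^a+n_i^b$ according to which of $a_i,b_i$ is the missing endpoint. Since $l_{in}$ counts the edges from $a_i$ to $V(P)\setminus\{a_i\}$, whose non-edges are exactly the $n_i^a$ non-edges to $V(P)\setminus\{a_i,b_i\}$ together with $a_ib_i$ (if $\epsilon=0$), one gets $l_{in}=(k-1)+\epsilon-n_i^a$, and symmetrically $r_{in}=(k-1)+\epsilon-n_i^b$. Summing yields the exact identity
$$l_{in}+r_{in}=2k-2+2\epsilon-n_i.$$

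For $l_{out}+r_{out}$, Claim \ref{claimzero} applied to $P$ gives $l_{out}\le k-r_{new}$ and $r_{out}\le k-l_{new}$, so $l_{out}+r_{out}\le 2k-(l_{new}+r_{new})$. Since $L_{old},R_{old}\subseteq\{c_1,\dots,c_k\}$ we have $l_{old},r_{old}\le k$; combined with $l=l_{new}+l_{old}$ and $r=r_{new}+r_{old}$ this yields $l_{new}+r_{new}\ge (l+r)-2k$, and therefore $l_{out}+r_{out}\le 4k-(l+r)$. Adding the two bounds gives
$$l+r=(l_{in}+r_{in})+(l_{out}+r_{out})\le (2k-2+2\epsilon-n_i)+(4k-(l+r)),$$
so $2(l+r)\le 6k-2+2\epsilon-n_i$, and since $\epsilon\le 1$ this rearranges to $l+r\le 3k-1+\epsilon-n_i/2\le 3k-n_i/2$, as required.

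I do not expect any significant obstacle here: the argument is essentially an algebraic combination of facts already established in Section \ref{basic_claims}. The only insight needed is that Claim \ref{claimzero} and the trivial bounds $l_{old},r_{old}\le k$ should be applied \emph{simultaneously}, so that the quantity $l_{new}+r_{new}$ (appearing with opposite signs in the two bounds) cancels and leaves an inequality in $l+r$ alone.
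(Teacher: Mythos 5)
Your proof is correct and follows essentially the same route as the paper: you apply Claim \ref{claimzero} and the bounds $l_{old},r_{old}\le k$ to the rainbow path $P$ with terminals $a_i,b_i$, together with the estimate $l_{in}+r_{in}\le 2k-n_i$. The only cosmetic difference is that you substitute $l_{new}+r_{new}\ge(l+r)-2k$ into the second bound rather than averaging the two bounds as the paper does, which yields the same (in fact marginally sharper) conclusion.
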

	\begin{proof}[Proof of Claim]
		Since $a_ib_i$ is an edge in the auxiliary graph $H$, there is a rainbow path $P = u_0u_1 \ldots u_k$ in $G$ such that $u_0 =a_i$, $u_k = b_i$ and $\{u_0, u_1, \ldots, u_k \} = \{v_0, v_1, \ldots, v_k \}$. We define the sets $L, R, L_{in}, L_{out}, R_{in}, L_{new}, R_{new}$ corresponding to $P$ in the same way as we did for $P^*$ (in Definition \ref{BasicColorSets}). Therefore, degree of $a_i$ is $l \le l_{new} + k$. Similarly, degree of $b_i$ is at most $r_{new} + k$. So the sum of degrees of $a_i$ and $b_i$ in $G$ is at most 
		\begin{equation}
		\label{one1}
		2k + l_{new} + r_{new}.
		\end{equation}
		
		On the other hand, the sum of degrees of $a_i$ and $b_i$ in $G$ is $l + r = l_{in} + l_{out} + r_{in} + r_{out}$. By   Claim \ref{claimzero}, this is at most  $(l_{in} + r_{in}) + k - r_{new} + k - l_{new} = (l_{in} + r_{in}) + 2k - l_{new} - r_{new}$. Moreover, it is easy to see that $l_{in} + r_{in} \le 2k -n_i$ by the definition of $n_i$. Therefore, the sum of degrees of $a_i$ and $b_i$ in $G$ is at most 
		\begin{equation}
		\label{two2}
		2k -n_i + 2k - l_{new} - r_{new}.
		\end{equation}
		Adding up \eqref{one1} and \eqref{two2} and dividing by 2, we get that the sum of degrees of $a_i$ and $b_i$ in $G$ is at most $$\frac{(2k + 2k -n_i + 2k)}{2} = \frac{(6k -n_i)}{2} = 3k -\frac{n_i}{2},$$ as desired.
	\end{proof}
	
	The sum $\sum_{i=1}^m (d(a_i) +d(b_i))$ counts each edge in the subgraph of $G$ induced by $M$ exactly twice (note that here $d(v)$ denotes the degree of the vertex $v$ in $G$). Therefore, the number of edges of $G$ incident to the vertices of $M$ is at most $\sum_{i=1}^m (d(a_i) +d(b_i)) - \abs{E(G[M])}$. Now using Claim \ref{gm} and Claim \ref{aibi}, the number of edges of $G$ incident to the vertices of $M$ is at most $$\sum_{i=1}^m \left(3k - \frac{n_i}{2}\right) - \left(2m^2-2m-\sum_{i=1}^m \frac{n_i}{2}\right) = 3km - 2m^2+2m = (3k+2-2m)m.$$ Now by \eqref{matching_size}, this is at most $$(3k+2-2m)m \le \left(3k+2-2 \left(\frac{3k}{14}\right)\right)m = \left(\frac{9k}{7}+1\right)2m < \left(\frac{9k}{7}+2\right)2m.$$
	We may delete the vertices of $M$ from $G$ to obtain a graph $G'$ on $n-2m$ vertices. By induction hypothesis, $G'$ contains less than $(\frac{9k}{7}+2)(n-2m)$ edges. Therefore, $G$ contains less than $$\left(\frac{9k}{7}+2\right)2m + \left(\frac{9k}{7}+2\right)(n-2m) =\left(\frac{9k}{7}+2\right)n$$ edges, as desired. This completes the proof of Theorem \ref{main_thm}.

	\section*{Acknowledgements}
	The research of the authors is partially supported by the National Research, Development and Innovation Office NKFIH, grant K116769.

\end{document}